\documentclass[12pt,twoside]{amsart}
\usepackage{amssymb,amscd,amsxtra,calc}
\usepackage{amsmath}
\usepackage{xypic}


\title{Semi-terminal modifications of demi-normal pairs}
\author{Kento Fujita} 
\date{\today}
\subjclass[2010]{Primary 14E30; Secondary 14B05}
\keywords{minimal model program, canonical model, demi-normal variety}
\address{Research Institute for Mathematical Sciences, 
Kyoto University, Kyoto 606-8502 Japan}
\email{fujita@kurims.kyoto-u.ac.jp}


\newcommand{\Z}{\mathbb{Z}}
\newcommand{\Q}{\mathbb{Q}}
\newcommand{\R}{\mathbb{R}}
\newcommand{\C}{\mathbb{C}}

\newcommand{\B}{\mathbb{B}}
\newcommand{\A}{\mathbb{A}}
\newcommand{\NC}{\operatorname{N}_1}

\newcommand{\NE}{\operatorname{NE}}

\newcommand{\Supp}{\operatorname{Supp}}

\newcommand{\Spec}{\operatorname{Spec}}

\newcommand{\Proj}{\operatorname{Proj}}
\newcommand{\codim}{\operatorname{codim}}

\newcommand{\cond}{\operatorname{cond}}

\newcommand{\totaldiscrep}{\operatorname{totaldiscrep}}
\newcommand{\discrep}{\operatorname{discrep}}
\newcommand{\Diff}{\operatorname{Diff}}
\newcommand{\Center}{\operatorname{center}}

\newcommand{\sO}{\mathcal{O}}

\newcommand{\sL}{\mathcal{L}}

\newcommand{\Sc}{\operatorname{sc}}
\newcommand{\St}{\operatorname{st}}


\newtheorem{thm}{Theorem}[section]
\newtheorem{lemma}[thm]{Lemma}
\newtheorem{proposition}[thm]{Proposition}

\newtheorem{claim}[thm]{Claim}

\theoremstyle{definition}
\newtheorem{definition}[thm]{Definition}
\newtheorem{remark}[thm]{Remark}

\newtheorem{example}[thm]{Example}
\newtheorem*{ack}{Acknowledgments}

\begin{document}

\maketitle 

\begin{abstract}
For a quasi-projective demi-normal pair $(X, \Delta)$, we prove that there exists a 
semi-canonical modification and a semi-terminal modification of $(X, \Delta)$.
\end{abstract}

\tableofcontents

\section{Introduction}\label{intro_section}

It is a very classical result that for a normal algebraic surface $X$ there exists a 
\emph{minimal resolution} 
$\pi\colon Y\to X$. The morphism $\pi$ is a projective and birational 
morphism such that $Y$ is smooth (i.e., $Y$ has terminal singularities) and there is 
no $(-1)$-curve over $X$ (i.e., $K_Y$ is nef over $X$). This result is generalized in 
\cite{BCHM}; for any normal variety $X$ there exists a 
\emph{terminal modification} $\pi\colon Y\to X$. More precisely, 
the morphism $\pi$ is a projective and birational 
morphism such that $Y$ has terminal singularities and $K_Y$ is nef over $X$. 
On the other hand, in \cite[\S 4]{KSB}, it is known that for any demi-normal 
(see Definition \ref{demi_dfn}) surface there exists a \emph{minimal-semi-resolution} 
$\pi\colon Y\to X$. The morphism $\pi$  is a projective and birational 
morphism such that $Y$ is semi-smooth (see Definition \ref{sing_dfn}), 
$\pi$ is an isomorphism outside a finite set over $X$ and is an isomorphism at any 
generic point of the double curve $D_Y$ of $Y$, and there is 
no $(-1)$-curve on the normalization of $Y$ over $X$ (this implies that 
$K_Y$ is nef over $X$). 

In this article, we consider the reducible version of terminal modifications 
of normal varieties. 
In other words, we consider the higher-dimensional version of minimal-semi-resolutions 
of demi-normal surfaces.
We introduce the notion of \emph{semi-terminal}. 
This notion is a direct generalization of 
semi-smooth surface singularities (see Definition \ref{scst_dfn} \eqref{scst_dfn2} 
and Example \ref{scst_ex} \eqref{scst_ex3}). 
The following is the main result of this article (for the definitions of semi-canonical 
modification and semi-terminal modification, see Definition \ref{model_dfn}).

\begin{thm}[Main Theorem]\label{mainthm}
Let $(X, \Delta)$ be a demi-normal pair. $($We do not assume that $K_X+\Delta$ 
is $\Q$-Cartier.$)$
\begin{enumerate}
\renewcommand{\theenumi}{\arabic{enumi}}
\renewcommand{\labelenumi}{(\theenumi)}
\item\label{mainthm1}
There exists a semi-canonical modification 
$f^{\Sc}\colon X^{\Sc}\to X$ of $(X, \Delta)$ and is unique. 
\item\label{mainthm2}
If $X$ is quasi-projective, then there exists a semi-terminal modification 
$f^{\St}\colon X^{\St}\to X$ of $(X, \Delta)$. Moreover, the morphism $f^{\St}$ 
may be chosen to be projective. 
\end{enumerate}
\end{thm}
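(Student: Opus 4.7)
The plan is to reduce both parts to their normal counterparts by passing to the normalization. Let $\nu \colon \bar{X} \to X$ denote the normalization and $\bar{D} \subset \bar{X}$ the conductor; pulling $\Delta$ back gives a normal pair $(\bar{X}, \bar{D} + \bar{\Delta})$, and by Koll\'ar's gluing theory for demi-normal varieties, $X$ is recovered from this datum together with the canonical involution $\tau$ on the normalization $(\bar{D})^\nu$ of the conductor.

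For part \eqref{mainthm1}, I would apply the existence of the log canonical modification in the normal case to $(\bar{X}, \bar{D} + \bar{\Delta})$---a consequence of finite generation of the log canonical ring via \cite{BCHM} and its refinements---obtaining $\bar{f} \colon \bar{X}' \to \bar{X}$. Because the canonical modification is characterised by a universal property, $\tau$ lifts uniquely to an involution on the normalization of the strict transform of $\bar{D}$ in $\bar{X}'$, and Koll\'ar's gluing criterion then descends $\bar{X}'$ to a demi-normal variety $X^{\Sc}$ equipped with a projective birational map $f^{\Sc} \colon X^{\Sc} \to X$ having the desired semi-canonical property. Uniqueness follows from uniqueness of the canonical modification upstairs together with uniqueness of the gluing.

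For part \eqref{mainthm2}, the analogous strategy runs into the difficulty that terminal modifications are generally not unique---they differ by flops over the base---so $\tau$ does not automatically lift. Starting from $X^{\Sc}$ produced in \eqref{mainthm1} and working on its normalization, I would run a $(K + \bar{D}')$-MMP with scaling on a suitable $\Q$-factorial terminalization of $(\bar{X}', \bar{D}' + \bar{\Delta}')$, aiming at a terminal model $\bar{Y}$ to which $\tau$ extends. Quasi-projectivity of $X$ is used here both to invoke the MMP results of \cite{BCHM} and to keep everything projective over $X$; once $\tau$ lifts to $\bar{Y}$, Koll\'ar's gluing again yields the desired $f^{\St} \colon X^{\St} \to X$.

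The main obstacle is precisely the lifting of $\tau$ to a terminal model. Since terminal models in a common flop-equivalence class are all legitimate candidates, one must either perform the MMP in a $\tau$-equivariant fashion from the start, or---after running an arbitrary MMP---symmetrize the outcome by a suitable finite sequence of flops. A delicate point is controlling the exceptional locus so that the codimension-one non-semi-smooth behavior of $\bar{D}$ is repaired without producing new obstructions to gluing; here one uses that the exceptional set is contained in a $\tau$-invariant subset of the base, namely the preimage of the non-semi-terminal locus.
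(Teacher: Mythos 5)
Your argument for part \eqref{mainthm1} is essentially the paper's: normalize, take the canonical modification of $(\bar{X},\Delta_{\bar X}+D_{\bar X})$ via \cite{BCHM}, lift the involution, and glue. The one point you should make precise is \emph{why} the involution lifts: the paper does this through Proposition \ref{OX29}, showing that the canonical modification of $(\bar X,\Delta_{\bar X}+D_{\bar X})$ restricts on the strict transform of $D_{\bar X}$ to the canonical modification of $\bar D_{\bar X}$, whose uniqueness then forces the extension of $\iota_X$. Also note it is the \emph{canonical} modification, not the log canonical modification, that is used; the latter need not exist without the $\Q$-Cartier hypothesis, and this distinction is the whole point of Theorem \ref{mainthm} \eqref{mainthm1}.

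For part \eqref{mainthm2} there is a genuine gap. You correctly identify the central obstruction --- minimal/terminal models over $X$ are not unique, so the involution $\tau$ on the normalization of the conductor has no reason to extend to the output of an MMP run on $\bar X$ --- but you do not resolve it. Neither of your two suggested remedies works as stated: a ``$\tau$-equivariant MMP'' is not meaningful on $\bar X$, since $\tau$ acts only on the normalization of the conductor (typically interchanging sheets lying on different irreducible components of $\bar X$), and ``symmetrizing by flops'' has no mechanism behind it. There is also a secondary problem: a $\Q$-factorial \emph{terminalization} of $(\bar X', \bar D'+\bar\Delta')$ in the usual sense does not exist when $\bar D'\neq 0$, because blowing up a codimension-$2$ center inside $\bar D'$ always produces a divisor of discrepancy $0$; this is precisely why the paper introduces the weaker notion of \emph{semi-terminal} (Definition \ref{scst_dfn} \eqref{scst_dfn22}), which exempts such divisors. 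The paper's route is structurally different and avoids the lifting problem altogether: it takes a semi-log-resolution $Y\to X$ of the non-normal variety itself, and runs an MMP with scaling \emph{on the reducible variety}, using Fujino's contraction theorem for slc pairs \cite{fujino} to contract extremal rays and then, in place of the (unavailable) flip, taking the demi-normalization of the contraction target followed by its semi-canonical modification --- which exists by part \eqref{mainthm1} and is canonically glued because the relevant log canonical class is ample over the target. Termination is then proved by decomposing each such step into an honest $(K+B)$-MMP on the normalization and concatenating these into a single MMP with scaling over $\bar X$, which terminates by \cite{BCHM}. Without some substitute for this mechanism, your sketch of part \eqref{mainthm2} does not close.
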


\begin{remark}\label{mainthm_rmk}
If $(X, \Delta)$ is a normal pair, then Theorem \ref{mainthm} is obtained by 
\cite{BCHM}. 
If $X$ is a demi-normal surface and $\Delta=0$, then Theorem \ref{mainthm} is 
obtained by \cite[\S 4]{KSB}.
\end{remark}

\begin{remark}\label{mainthm_rmk2}
There are many semi-terminal modifications for a given non-normal 
demi-normal pair $(X, \Delta)$. For example, let $X:=(x_1x_2=0)\subset\A^{3}$ and 
let $\pi\colon\tilde{X}\to X$ be the blowing up at the origin. Then the pairs $(X, 0)$ 
and $(\tilde{X}, 0)$ are semi-terminal and $K_{\tilde{X}}$ is nef over $X$. Hence 
both the identity morphism and $\pi$ are semi-terminal modifications of $(X, 0)$. 
Therefore, for a demi-normal surface $X$, the notion of 
semi-terminal modification of $(X, 0)$ is much weaker than the notion of 
minimal-semi-resolution of $X$. 
\end{remark}

Now we organize the strategy of the proof of Theorem \ref{mainthm}. 
For Theorem \ref{mainthm} \eqref{mainthm1}, the argument is essentially same as 
that of \cite{OX}; taking the normalization, taking the canonical modification and gluing 
along the conductor divisors. For a demi-normal pair $(X, \Delta)$, 
the authors of \cite{OX} remark that 
if $K_X+\Delta$ is not $\Q$-Cartier then the semi-log-canonical modification of 
$(X, \Delta)$ does not exist in general (see \cite[Example 1.40]{SingBook} and 
\cite[Example 3.1]{OX}). However, we remark that 
Theorem \ref{mainthm} \eqref{mainthm1} says that 
for \emph{every} demi-normal pair $(X, \Delta)$ 
(without the assumption $K_X+\Delta$ is $\Q$-Cartier),
there exists the semi-canonical modification of $(X, \Delta)$. 

The strategy of the proof of Theorem \ref{mainthm} \eqref{mainthm2} is the following. 
First, for a given demi-normal pair $(X, \Delta)$, we take a semi-log-resolution 
$Y\to X$. Then we run a \emph{MMP with scaling over $X$ for reducible varieties}. 
It is known that the Contraction theorem for semi-log-canonical pairs was 
established in \cite{fujino}. However, as in \cite[Example 5.4]{fujino},
no possible minimal model program for reducible varieties has been known in general 
(at least in absolute setting). 
Our strategy is taking the \emph{semi-canonical modification} instead of taking 
the flip for the contraction morphism. 
Then the program can be run in this case. 
Finally, we decompose each step of the 
program and show that the program terminates (cf.\ \cite[\S 4.2]{BOOK}). 
This is the strategy of the proof of Theorem \ref{mainthm} \eqref{mainthm2}.

\begin{ack}
The author would like to thank Professors Shigefumi Mori, Shigeru Mukai, 
Noboru Nakayama, Osamu Fujino, Masayuki Kawakita and Stefan Helmke 
for comments during the seminars in RIMS. 
Especially, Professor Masayuki Kawakita helps the author 
for improving the notion of ``semi-terminal" and Professor Osamu Fujino helps the 
author for improving the proof of Lemma \ref{base_lem}. 
The author learned the theory of demi-normal varieties from 
Professor J\'anos Koll\'ar during the author visited Princeton University. 
The author is partially supported by a JSPS Fellowship for Young Scientists. 
\end{ack}

Throughout this paper, we will work over the complex number field $\C$. 
In this paper, a \emph{variety} means a reduced algebraic (separated 
and of finite type) scheme over $\C$. 
For a morphism $f\colon Y\to X$ between equidimensional varieties, 
the morphism $f$ is said to be \emph{an isomorphism in codimension $1$ over $X$} 
if there exists an open subscheme $U\subset X$ such that 
$\codim_X(X\setminus U)\geq 2$ and $f\colon f^{-1}(U)\to U$ is an isomorphism. 
For a variety $X$, the normalization of $X$ is 
denoted by $\nu_X\colon\bar{X}\to X$.
For the theory of minimal model program (\emph{MMP}, for short), 
we refer the readers to \cite{KoMo} and \cite{SingBook}. 

\section{Preliminaries}\label{prelim_section}

In this section, we collect some basic definitions and results. 

\begin{definition}\label{sing_dfn}
\begin{enumerate}
\renewcommand{\theenumi}{\arabic{enumi}}
\renewcommand{\labelenumi}{(\theenumi)}
\item\label{sing_dfn1}
Let $X$ be a variety and let $x\in X$ be a closed point. 
We say that $x\in X$ is a \emph{double normal crossing} (\emph{dnc}, for short) point 
if $\hat{\sO}_{X, x}\simeq\C[[x_0,\dots,x_n]]/(x_0x_1)$; a \emph{pinch} point 
if $\hat{\sO}_{X, x}\simeq\C[[x_0,\dots,x_n]]/(x_0^2-x_1^2x_2)$, respectively. 
\item\label{sing_dfn2}
A variety $X$ is said to be a \emph{double normal crossing} variety (\emph{dnc} variety, 
for short) if any closed point $x\in X$ is either smooth or dnc point; 
a \emph{semi-smooth} variety if any closed point $x\in X$ is one of smooth, 
dnc or pinch point, respectively.
\end{enumerate}
\end{definition}

\begin{definition}[{\cite[\S 5.1]{SingBook}}]\label{demi_dfn}
\begin{enumerate}
\renewcommand{\theenumi}{\arabic{enumi}}
\renewcommand{\labelenumi}{(\theenumi)}
\item\label{demi_dfn1}
Let $X$ be an equidimensional variety. We say that $X$ is a \emph{demi-normal} 
variety if $X$ satisfies Serre's $S_2$ condition and is dnc outside codimension $2$. 
\item\label{demi_dfn2}
For an equidimensional variety $X$, if $X$ is dnc outside codimension $2$, then 
there exists a unique finite and birational morphism $d\colon X^d\to X$ such that 
$X^d$ is a demi-normal variety and $d$ is an isomorphism 
in codimension $1$ over $X$. We call $d$ the \emph{demi-normalization} 
of $X$.
\item\label{demi_dfn3}
Let $X$ be a demi-normal variety and let $\nu_X\colon \bar{X}\to X$
be its normalization. 
Then the \emph{conductor ideal} of $X$ is defined by 
$\cond_X:=\mathcal Hom_{\sO_X}((\nu_X)_*\sO_{\bar{X}}, \sO_X)\subset\sO_X$. 
This can be seen as an ideal sheaf $\cond_{\bar{X}}$ on $\bar{X}$. 
Let $D_X:=\Spec_X(\sO_X/\cond_X)$ and 
$D_{\bar{X}}:=\Spec_{\bar{X}}(\sO_{\bar{X}}/\cond_{\bar{X}})$. 
We call $D_X$ (resp.\ $D_{\bar{X}}$) as the \emph{conductor divisor} of $X$ 
(resp.\ of $\bar{X}/X$). It is known that both $D_X$ and $D_{\bar{X}}$ are reduced 
and of pure codimension $1$. Moreover, for the normalization 
$\nu_{D_{\bar{X}}}\colon\bar{D}_{\bar{X}}\to D_{\bar{X}}$, we get the Galois involution 
$\iota_X\colon\bar{D}_{\bar{X}}\to\bar{D}_{\bar{X}}$ defined by $\nu_X$. 
\end{enumerate}
\end{definition}

\begin{definition}\label{scst_dfn}
\begin{enumerate}
\renewcommand{\theenumi}{\arabic{enumi}}
\renewcommand{\labelenumi}{(\theenumi)}
\item\label{scst_dfn1}
The pair $(X, \Delta)$ is said to be a \emph{demi-normal pair} if 
$X$ is a demi-normal variety and $\Delta$ is a formal $\Q$-linear combination
$\Delta=\sum_{i=1}^ka_i\Delta_i$ of irreducible and reduced closed subvarieties 
$\Delta_i$ of codimension $1$ such that $\Delta_i\not\subset\Supp D_X$ and 
$a_i\in[0, 1]\cap\Q$ for any $1\leq i\leq k$. Furthermore, if $X$ is a normal variety, 
then the pair $(X, \Delta)$ is said to be a \emph{normal pair}.
\item\label{scst_dfn2}
Let $(X, \Delta)$ be a demi-normal pair and let $\nu_X\colon\bar{X}\to X$ be the 
normalization of $X$. Set $\Delta_{\bar{X}}:=(\nu_X)^{-1}_*\Delta$. 
\begin{enumerate}
\renewcommand{\theenumii}{\roman{enumii}}
\renewcommand{\labelenumii}{(\theenumii)}
\item\label{scst_dfn21}
\cite[Definition 4.17]{KSB}
The pair $(X, \Delta)$ is \emph{semi-canonical} if $K_X+\Delta$ is $\Q$-Cartier 
and the pair $(\bar{X}, \Delta_{\bar{X}}+D_{\bar{X}})$ has canonical singularities.
\item\label{scst_dfn22}
The pair $(X, \Delta)$ is \emph{semi-terminal} if $(X, \Delta)$ is semi-canonical 
and for any exceptional divisor $E$ over $\bar{X}$ the inequality 
$a(E, \bar{X}, \Delta_{\bar{X}}+D_{\bar{X}})>0$ holds unless 
$\Center_{\bar{X}}E\subset\lfloor\Delta_{\bar{X}}+D_{\bar{X}}\rfloor$ and 
$\codim_{\bar{X}}(\Center_{\bar{X}}E)=2$. 
\end{enumerate}
\end{enumerate}
\end{definition}

\begin{remark}\label{scstdfn_rmk}
Let $(Y, \Delta+S)$ be a normal pair with $S=\lfloor S\rfloor$. 
\begin{enumerate}
\renewcommand{\theenumi}{\arabic{enumi}}
\renewcommand{\labelenumi}{(\theenumi)}
\item\label{scstdfn_rmk1}
If the pair $(Y, \Delta+S)$ has canonical singularities, then 
$\Diff_{S}\Delta=0$ and the pair $(S, 0)$ has canonical singularities. 
In particular, $S$ is normal. 
\item\label{scstdfn_rmk2}
If the pair $(Y, \Delta+S)$ is semi-terminal, then 
the pair $(S, 0)$ has terminal singularities. 
\end{enumerate}
In particular, for a demi-normal pair $(X, \Delta)$, the following holds. 
(1) If $(X, \Delta)$ is semi-canonical then the pair  
$(\lfloor\Delta_{\bar{X}}+D_{\bar{X}}\rfloor, 0)$ has canonical singularities. 
(2) If $(X, \Delta)$ is semi-terminal then the pair  
$(\lfloor\Delta_{\bar{X}}+D_{\bar{X}}\rfloor, 0)$ has terminal singularities. 
\end{remark}

\begin{proof}
\eqref{scstdfn_rmk1}
Since the pair $(Y, \Delta+S)$ is plt, $S$ is normal by \cite[Proposition 5.51]{KoMo}. 
Moreover, by adjunction, 
\[
\totaldiscrep(S, \Diff_S\Delta)\geq\discrep(\text{center}\subset S, Y, \Delta+S)\geq 0
\]
holds (see \cite[Lemma 4.8]{SingBook}). Thus $\Diff_S\Delta=0$ and the pair 
$(S, 0)$ has canonical singularities.

\eqref{scstdfn_rmk2}
Assume that the pair $(S, 0)$ does not have terminal singularities. 
Then there exists an exceptional divisor $E_S$ over $S$ such that 
$a(E_S, S, 0)=0$ by \eqref{scstdfn_rmk1}. 
We note that $\codim_Y Z\geq 3$, where $Z:=\Center_S E_S$. 
By adjunction,
\[
\totaldiscrep(\Center\subset Z, S, \Diff_S\Delta)
\geq\discrep(\Center\subset Z, Y, \Delta+S)
\]
holds. Moreover, by the fact that the pair $(Y, \Delta+S)$ is semi-terminal, 
the right-hand of the above inequality is positive. 
However, the left-hand of the above inequality is less than or equal to $a(E_S, S, 0)=0$. 
This leads to a contradiction. Thus the pair $(S, 0)$ has terminal singularities. 
\end{proof}

\begin{example}\label{scst_ex}
\begin{enumerate}
\renewcommand{\theenumi}{\arabic{enumi}}
\renewcommand{\labelenumi}{(\theenumi)}
\item\label{scst_ex1}
\cite[Corollary 2.31]{KoMo}
If $(X, \Delta)$ is a normal pair such that $X$ is a smooth variety and 
$\Supp\Delta\subset X$ is a smooth divisor, then the pair $(X, \Delta)$
is semi-terminal. 
\item\label{scst_ex2}
If $(X, \Delta)$ is a demi-normal pair such that $X$ is a semi-smooth variety, 
$\Supp\Delta$ is contained in the smooth locus of $X$ and $\Supp\Delta$ is 
a smooth divisor, then the pair $(X, \Delta)$ is semi-terminal by \eqref{scst_ex1}.
\item\label{scst_ex3}\cite[Proposition 4.12]{KSB}
Let $X$ be a demi-normal surface and $x\in X$ be a closed point. 
The pair $(X, 0)$ is semi-canonical around $x$ if and only if $x\in X$ is either 
smooth, du Val, dnc or pinch point.
The pair $(X, 0)$ is semi-terminal around $x$ if and only if $X$ is semi-smooth 
around $x$. Thus the notion of semi-terminal singularities is a direct generalization of 
the notion of semi-smooth surface singularities. 
\end{enumerate}
\end{example}

\begin{definition}\label{model_dfn}
Let $(X, \Delta)$ be a demi-normal pair and let $f\colon Y\to X$ be a proper 
birational morphism such that $Y$ is a demi-normal variety, $f$ is an isomorphism 
in codimension $1$ over $X$ and 
$f$ is an isomorphism around any generic point of $D_Y$. 
Set $\Delta_Y:=f^{-1}_*\Delta$.
\begin{enumerate}
\renewcommand{\theenumi}{\arabic{enumi}}
\renewcommand{\labelenumi}{(\theenumi)}
\item\label{model_dfn1}
The morphism $f$ is said to be a \emph{semi-canonical modification} of $(X, \Delta)$ 
if $(Y, \Delta_Y)$ is semi-canonical and $K_Y+\Delta_Y$ is ample over $X$. 
Furthermore, if $X$ (and also $Y$) is a normal variety, then such $f$ is called 
a \emph{canonical modification} of $(X, \Delta)$.  
\item\label{model_dfn2}
The morphism $f$ is said to be a \emph{semi-terminal modification} of $(X, \Delta)$ 
if $(Y, \Delta_Y)$ is semi-terminal and $K_Y+\Delta_Y$ is nef over $X$. 
\end{enumerate}
\end{definition}

\section{Semi-canonical modifications}\label{sc_section}

The following lemma and proposition are proven essentially 
same as \cite[Lemma 2.1 and Proposition 2.2]{OX}. 

\begin{lemma}[{cf.\ \cite[Lemma 2.1]{OX}}]\label{OX22}
Let $(X, \Delta)$ be a normal pair and let $\tilde{f}\colon\tilde{Y}\to X$ 
be a projective log resolution of $(X, \Delta)$ such that 
$\Supp\Delta_{\tilde{Y}}\subset\tilde{Y}$ is a smooth divisor, 
where $\Delta_{\tilde{Y}}:=\tilde{f}^{-1}_*\Delta$. 
If the pair $(\tilde{Y}, \Delta_{\tilde{Y}})$ 
has a canonical model $(Y, \Delta_Y)$ over $X$ 
$($for the definition of canonical models of pairs, see \cite[\S 3.8]{KoMo}$)$, 
then the morphism $Y\to X$ is a canonical modification of $(X, \Delta)$. 
\end{lemma}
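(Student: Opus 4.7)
My plan is to verify that $Y\to X$ satisfies the three conditions of Definition \ref{model_dfn} \eqref{model_dfn1}: it is a proper birational morphism that is an isomorphism in codimension~$1$ over $X$; the pair $(Y, \Delta_Y)$ has canonical singularities; and $K_Y+\Delta_Y$ is ample over $X$. Ampleness is immediate from the definition of a canonical model, so it is the other two conditions that require work.

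For the singularity condition, I would first note that $(\tilde Y, \Delta_{\tilde Y})$ is semi-terminal by Example \ref{scst_ex} \eqref{scst_ex1}, since $\tilde Y$ is smooth and $\Supp\Delta_{\tilde Y}$ is a smooth divisor; in particular $(\tilde Y, \Delta_{\tilde Y})$ has canonical singularities. Then applying the negativity lemma to any common resolution of the birational contraction $\phi\colon \tilde Y\dashrightarrow Y$ over $X$ would give $a(E, Y, \Delta_Y)\geq a(E, \tilde Y, \Delta_{\tilde Y})\geq 0$ for every geometric valuation $E$, so $(Y, \Delta_Y)$ is also canonical.

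The only substantive issue, which I expect to be the main obstacle, is the codimension~$1$ isomorphism. My plan is to localize at each codimension~$1$ point. Let $\eta$ be the generic point of an arbitrary prime divisor of $X$. Since $X$ is normal, it is smooth at $\eta$, and since every $\tilde f$-exceptional divisor has image of codimension~$\geq 2$ in $X$, I may shrink to an open neighborhood $U\ni\eta$ on which $U$ is smooth, $\Supp\Delta|_U$ is a smooth divisor, and $\tilde f|_{\tilde f^{-1}(U)}\colon\tilde f^{-1}(U)\xrightarrow{\sim} U$ is already an isomorphism. By Example \ref{scst_ex} \eqref{scst_ex1}, the pair $(U, \Delta|_U)$ is then semi-terminal. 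The defining properties of a canonical model — birational contraction, pushed-forward boundary, relative ampleness of $K+\Delta$, and the discrepancy inequality on exceptional divisors — are all compatible with open base change, so by the uniqueness of canonical models the restriction $Y\times_X U\to U$ is a canonical model of $(\tilde f^{-1}(U), \Delta_{\tilde Y}|_{\tilde f^{-1}(U)})\simeq (U, \Delta|_U)$ over $U$. But the identity $U\to U$ already tautologically realizes a canonical model of $(U, \Delta|_U)$ over $U$, so by uniqueness $Y\times_X U\to U$ must be an isomorphism. Letting $\eta$ range over all codimension~$1$ points of $X$ then yields the desired isomorphism in codimension~$1$.
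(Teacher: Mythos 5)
Your proof of the key point---that $(Y,\Delta_Y)$ has canonical singularities---is essentially the paper's own argument: the paper also observes that $(\tilde{Y},\Delta_{\tilde{Y}})$ is canonical because $\tilde{Y}$ is smooth with smooth boundary support, and then invokes \cite[Proposition 3.51]{KoMo} for exactly the discrepancy inequality $a(E,Y,\Delta_Y)\geq a(E,\tilde{Y},\Delta_{\tilde{Y}})$ that you derive from the negativity lemma on a common resolution. Your additional verification that $Y\to X$ is an isomorphism in codimension $1$ (by localizing at divisorial points of $X$ and using that canonical models commute with restriction to opens of the base and are unique) is correct and is in fact more than the paper writes down; the paper simply declares that it suffices to check canonicity of $(Y,\Delta_Y)$ and leaves the codimension-$1$ condition and the relative ampleness implicit.

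There is, however, one intermediate claim in your write-up that is literally false and needs the small repair that the paper makes explicit. You assert $a(E,\tilde{Y},\Delta_{\tilde{Y}})\geq 0$ for \emph{every} geometric valuation $E$; but if $E$ is a component of $\Supp\Delta_{\tilde{Y}}$ appearing in $\Delta_{\tilde{Y}}$ with coefficient $a_i>0$, then $a(E,\tilde{Y},\Delta_{\tilde{Y}})=-a_i<0$. What you actually need is the inequality only for divisors $E$ that are exceptional over $Y$, and for such $E$ one must note (as the paper does) that $E$ is either exceptional over $\tilde{Y}$ or a divisor on $\tilde{Y}$ not contained in $\Supp\Delta_{\tilde{Y}}$. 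The latter alternative holds because every component of $\Delta_{\tilde{Y}}=\tilde{f}^{-1}_*\Delta$ dominates a divisor of $X$, and a divisor contracted by the birational contraction $\tilde{Y}\dashrightarrow Y$ over $X$ has center of codimension at least $2$ on $Y$, hence of codimension at least $2$ on $X$; so no component of $\Delta_{\tilde{Y}}$ can be contracted. With the quantifier restricted in this way your chain of inequalities goes through and the conclusion stands.
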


\begin{proof}
It is enough to show that the pair $(Y, \Delta_Y)$ has canonical singularities. 
Take any exceptional divisor $E$ over $Y$. 
Then $E$ is either an exceptional divisor over $\tilde{Y}$ or a divisor on $\tilde{Y}$ 
with $E\not\subset\Supp\Delta_{\tilde{Y}}$. 
Thus $a(E, \tilde{Y}, \Delta_{\tilde{Y}})\geq 0$ holds since the pair 
$(\tilde{Y}, \Delta_{\tilde{Y}})$ has canonical singularities. 
By \cite[Proposition 3.51]{KoMo}, 
$a(E, Y, \Delta_Y)\geq a(E, \tilde{Y}, \Delta_{\tilde{Y}})\geq 0$ holds. 
Hence the pair $(Y, \Delta_Y)$ has canonical singularities. 
\end{proof}

\begin{proposition}[{cf.\ \cite[Proposition 2.2]{OX}}]\label{OX23}
Let $(X, \Delta)$ be a normal pair. Then a canonical modification of $(X, \Delta)$ is 
unique, if exists. 
\end{proposition}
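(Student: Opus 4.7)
The plan is to run the standard negativity-lemma argument for uniqueness of canonical models. I would start by supposing $f_i\colon Y_i\to X$ ($i=1,2$) are two canonical modifications of $(X,\Delta)$ and choosing a smooth variety $W$ equipped with projective birational morphisms $g_i\colon W\to Y_i$ resolving the indeterminacy of the induced birational map $Y_1\dashrightarrow Y_2$ over $X$. Letting $\Delta_W$ be the strict transform of $\Delta$ on $W$, the discrepancy formula reads
\[
K_W+\Delta_W = g_i^*(K_{Y_i}+\Delta_{Y_i}) + F_i,
\]
where each $F_i$ is $g_i$-exceptional and $F_i\geq 0$ because $(Y_i,\Delta_{Y_i})$ has canonical singularities.

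A first observation I would record is that a prime divisor $E\subset W$ is $g_1$-exceptional if and only if its image in $X$ has codimension $\geq 2$ if and only if it is $g_2$-exceptional, the equivalences using that each $f_i$ is an isomorphism in codimension $1$. Hence $F_1$ and $F_2$ are supported on the same set of exceptional divisors, and the $\Q$-divisor
\[
N := F_2-F_1 = g_1^*(K_{Y_1}+\Delta_{Y_1}) - g_2^*(K_{Y_2}+\Delta_{Y_2})
\]
is $g_2$-exceptional.

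The key step will be to show that $N$ is $g_2$-nef and then to invoke the negativity lemma (e.g.\ \cite[Lemma 3.39]{KoMo}). For a curve $C$ contracted by $g_2$, either $g_1$ also contracts $C$, in which case $g_1^*(K_{Y_1}+\Delta_{Y_1})\cdot C = 0$, or $g_1(C)$ is an $f_1$-contracted curve in $Y_1$; in the latter case the $f_1$-ampleness of $K_{Y_1}+\Delta_{Y_1}$ forces $g_1^*(K_{Y_1}+\Delta_{Y_1})\cdot C > 0$. In either case $N\cdot C\geq 0$, so the negativity lemma yields $F_1-F_2 = -N\geq 0$; interchanging the roles of $Y_1$ and $Y_2$ gives the reverse inequality, so that $F_1 = F_2$ and $g_1^*(K_{Y_1}+\Delta_{Y_1}) = g_2^*(K_{Y_2}+\Delta_{Y_2})$.

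To conclude, I would use that $K_{Y_i}+\Delta_{Y_i}$ is $f_i$-ample to identify each $Y_i$ with $\Proj_X\bigoplus_{m\geq 0}(f_i)_*\sO_{Y_i}(m(K_{Y_i}+\Delta_{Y_i}))$; the coincidence of the two pullbacks on $W$ then forces the two graded $\sO_X$-algebras to agree, yielding $Y_1\cong Y_2$ over $X$. The main obstacle is verifying the hypotheses of the negativity lemma, and in particular the $g_2$-nefness of $N$, where both the codimension-$1$ isomorphism condition on the $f_i$ and the relative ampleness of $K_{Y_i}+\Delta_{Y_i}$ are essential.
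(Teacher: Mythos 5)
Your argument is correct, but it takes a genuinely different route from the paper's. The paper proves uniqueness by identifying any canonical modification $f\colon Y\to X$ with $\Proj_X\bigoplus_{m\geq 0}\tilde{f}_*\sO_{\tilde{Y}}(\lfloor m(K_{\tilde{Y}}+\Delta_{\tilde{Y}})\rfloor)$ for a log resolution $g\colon\tilde{Y}\to Y$: since $(Y,\Delta_Y)$ is canonical one has $K_{\tilde{Y}}+\Delta_{\tilde{Y}}=g^*(K_Y+\Delta_Y)+F$ with $F\geq 0$ and $g$-exceptional, so the relative section rings upstairs and downstairs agree, and the $\Proj$ depends only on $(X,\Delta)$ (via the birational invariance of the log canonical ring of a log resolution, which the paper leaves implicit). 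You instead compare two given modifications directly on a common resolution and use the negativity lemma to force $g_1^*(K_{Y_1}+\Delta_{Y_1})=g_2^*(K_{Y_2}+\Delta_{Y_2})$, then finish with the same $\Proj$ identification. Your version makes the comparison of two modifications explicit and avoids appealing to invariance of the log canonical ring; the paper's version avoids the negativity lemma entirely. Both are standard and both work.

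One intermediate claim in your write-up is false, though fortunately it is not needed. A canonical modification $f_i\colon Y_i\to X$ is an isomorphism in codimension $1$ \emph{over $X$}, but it may well contract divisors of $Y_i$ (e.g.\ the canonical modification of a surface with a non-canonical singularity has exceptional curves). Hence a prime divisor $E\subset W$ whose image in $X$ has codimension $\geq 2$ need not be $g_1$-exceptional: it could dominate an $f_1$-exceptional divisor of $Y_1$. So it is not clear a priori that $F_1$ and $F_2$ have the same support, nor that $N=F_2-F_1$ is $g_2$-exceptional. This does no harm to your proof, because the negativity lemma in the form you cite (\cite[Lemma 3.38]{KoMo}) requires only that $N$ be $g_2$-nef and that $(g_2)_*(-N)=(g_2)_*F_1$ be effective, and the latter is automatic from $F_1\geq 0$. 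I would simply delete the biconditional and run the negativity step with the pushforward hypothesis instead.
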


\begin{proof}
Let $f\colon Y\to X$ be a canonical modification of the pair $(X, \Delta)$. 
Let $g\colon\tilde{Y}\to Y$ be an arbitrary log resolution of the pair $(Y, \Delta_Y)$ 
such that $\Supp\Delta_{\tilde{Y}}\subset Y$ is a smooth divisor, where 
$\Delta_Y:=f^{-1}_*\Delta$ and $\Delta_{\tilde{Y}}:=g^{-1}_*\Delta_{Y}$. 
Set $\tilde{f}:=f\circ g\colon\tilde{Y}\to X$. 
Since the pair $(Y, \Delta_Y)$ has canonical singularities, 
we can write $K_{\tilde{Y}}+\Delta_{\tilde{Y}}=g^*(K_Y+\Delta_Y)+F$ such that 
$F$ is an effective exceptional divisor over $Y$. 
Therefore $Y$ is isomorphic to 
\[
\Proj_X\bigoplus_{m\geq 0}f_*
\sO_Y(\lfloor m(K_Y+\Delta_Y)\rfloor)\simeq
\Proj_X\bigoplus_{m\geq 0}\tilde{f}_*
\sO_{\tilde{Y}}(\lfloor m(K_{\tilde{Y}}+\Delta_{\tilde{Y}})\rfloor).
\]
Therefore a canonical modification of $(X, \Delta)$ is unique.
\end{proof}

We recall the results in \cite{BCHM}.

\begin{thm}[{\cite{BCHM}}]\label{BCHM_thm}
Let $(X, \Delta)$ be a quasi-projective normal $\Q$-factorial dlt pair and 
let $f\colon X\to U$ be a projective morphism 
between normal quasi-projective varieties. We assume that $\Delta$ and $K_X+\Delta$ 
are big over $U$ and $\B_+(\Delta/U)$ does not contain any lc center of $(X, \Delta)$, 
where $\B_+(\Delta/U)$ is the augmented base locus of $\Delta$ over $U$ 
$($see \cite[Definition 3.5.1]{BCHM}$)$. Then any $(K_X+\Delta)$-MMP with 
ample scaling over $U$ induces a good minimal model $(X^m, \Delta^m)$ over $U$, 
that is, $K_{X^m}+\Delta^m$ is semiample over $U$. 
\end{thm}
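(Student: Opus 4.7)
The plan is to deduce Theorem~\ref{BCHM_thm} from the main results of \cite{BCHM} by the standard dlt-to-klt perturbation; the hypothesis that $\B_+(\Delta/U)$ contains no lc center of $(X,\Delta)$ is precisely what enables this perturbation. Since $\Delta$ is big over $U$, the description of the augmented base locus furnishes, for each of the finitely many lc centers $Z$ of the dlt pair $(X,\Delta)$, a decomposition $\Delta\sim_{\Q,U}A_Z+E_Z$ with $A_Z$ ample over $U$ and $Z\not\subset\Supp E_Z$. A convex combination of these yields a single decomposition $\Delta\sim_{\Q,U}A+E$ with $A$ ample, $E$ effective, and $\Supp E$ avoiding every lc center. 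Then for a sufficiently small rational $\epsilon>0$ and a general $A'\sim_{\Q,U}A$ I set
\[
\Delta':=(1-\epsilon)\Delta+\epsilon E+\epsilon A',
\]
so that $K_X+\Delta'\sim_{\Q,U}K_X+\Delta$, both $\Delta'$ and $K_X+\Delta'$ remain big over $U$, and $(X,\Delta')$ is klt: the coefficients of $(1-\epsilon)\Delta$ are strictly less than one, and the perturbation $\epsilon E+\epsilon A'$ is supported off the lc centers of $(X,\Delta)$.

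Now I apply \cite[Corollary~1.4.2]{BCHM} to the klt pair $(X,\Delta')$ with big boundary: any $(K_X+\Delta')$-MMP with ample scaling over $U$ terminates at a minimal model $(X^m,\Delta'^m)$, on which $K_{X^m}+\Delta'^m$ is semiample over $U$ by the base point free theorem, since it is nef and big. Because $K_X+\Delta$ and $K_X+\Delta'$ are $\Q$-linearly equivalent over $U$, an extremal ray is $(K_X+\Delta)$-negative exactly when it is $(K_X+\Delta')$-negative, and the nef thresholds against any fixed ample divisor $H$ coincide. Hence a $(K_X+\Delta)$-MMP with scaling of $H$ can be identified step by step with a $(K_X+\Delta')$-MMP with the same scaling; the semiampleness of $K_{X^m}+\Delta'^m$ then transfers through the $\Q$-linear equivalence to $K_{X^m}+\Delta^m$, giving the good minimal model over $U$.

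I expect the main obstacle to be the bookkeeping needed to keep these two programs truly in lockstep: one must verify that dlt-ness of $(X,\Delta)$ and klt-ness of $(X,\Delta')$ are preserved simultaneously under each divisorial contraction and flip, and that $\Supp(\epsilon E+\epsilon A')$ continues to avoid the lc centers of the intermediate dlt pairs, so that the reduction to the klt input of \cite{BCHM} remains valid throughout the program. Once this routine but careful checking is carried out, termination and goodness of the minimal model follow immediately from \cite{BCHM}.
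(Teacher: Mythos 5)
Your overall route is exactly the paper's: perturb the dlt pair to a $\Q$-linearly equivalent klt pair with big boundary, quote \cite[Corollary 1.4.2]{BCHM} for termination of the MMP with ample scaling, and get semiampleness from the basepoint-free theorem since $K_{X^m}+\Delta^m$ is nef and big over $U$. The only difference is that the paper outsources the perturbation step entirely to \cite[Lemma 3.7.5]{BCHM}, whereas you reprove it by hand -- and it is precisely there that your argument has a flaw. For each lc center $Z$ you correctly extract a decomposition $\Delta\sim_{\Q,U}A_Z+E_Z$ with $Z\not\subset\Supp E_Z$, but the convex combination $E=\sum t_Z E_Z$ has $\Supp E=\bigcup_Z\Supp E_Z$, and a given lc center $Z$ may very well be contained in $\Supp E_{Z'}$ for some other center $Z'$. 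So the claim that ``$\Supp E$ avoids every lc center'' does not follow. The correct fix is to work inside a single linear system: choose $\delta$ and $m$ so that the $E_Z$ (after absorbing a fixed ample piece) are members of one linear system $|M|$ over $U$; then for each lc center $Z$ the locus of members containing $Z$ is a proper linear subspace, and a \emph{general member} of $|M|$ (not a convex combination of the chosen divisors) avoids all finitely many lc centers simultaneously. With that repair, the klt-ness of $(X,(1-\epsilon)\Delta+\epsilon E+\epsilon A')$ for $0<\epsilon\ll 1$ goes through as you intend (checking discrepancies on one log resolution of $(X,\Delta+E+A')$: divisors with $a(F,X,\Delta)=-1$ have center equal to an lc center, hence pick up no multiplicity from $E+A'$, and the remaining finitely many inequalities are strict at $\epsilon=0$).

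Two smaller remarks. First, the identification of the $(K_X+\Delta)$-MMP with scaling with the $(K_X+\Delta')$-MMP with scaling is fine and needs only that $K_X+\Delta\sim_{\Q,U}K_X+\Delta'$ is preserved under pushforward at each step; this is exactly what the paper implicitly uses. Second, the ``main obstacle'' you flag at the end is not actually an obstacle: you do not need to re-verify dlt-ness of the intermediate pairs or that $\Supp(\epsilon E+\epsilon A')$ keeps avoiding lc centers, because \cite[Corollary 1.4.2]{BCHM} is applied once to the klt pair $(X,\Delta')$ and already guarantees termination of the entire program (klt-ness being automatically preserved by MMP steps).
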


\begin{proof}
By \cite[Lemma 3.7.5]{BCHM}, there exists an effective $\Q$-divisor $\Delta'$ 
such that $K_X+\Delta\sim_{\Q, U}K_X+\Delta'$ and the pair $(X, \Delta')$ is klt. 
Thus we may run $(K_X+\Delta)$-MMP with ample scaling over $U$ and terminates 
by \cite[Corollary 1.4.2]{BCHM}. Since $K_X+\Delta$ is big over $U$, this MMP induces 
a good minimal model over $U$ by the Basepoint-free theorem 
\cite[Theorem 3.24]{KoMo}.
\end{proof}

The following lemma is well-known. 

\begin{lemma}\label{base_lem}
Let $(X, \Delta)$ be a quasi-projective normal pair and let 
$f\colon Y\to X$ be a projective log resolution of $(X, \Delta)$. 
Set $\Delta_Y:=f^{-1}_*\Delta$. 
Then we have the following:
\begin{enumerate}
\renewcommand{\theenumi}{\arabic{enumi}}
\renewcommand{\labelenumi}{(\theenumi)}
\item\label{base_lem1}
Any $\Q$-divisor on $Y$ is big over $X$.
\item\label{base_lem2}
The augmented base locus $\B_+(\Delta_Y/X)$ does not contain 
any irreducible component of $\Supp\Delta_Y$.
\end{enumerate}
\end{lemma}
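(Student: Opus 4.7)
The strategy is to exploit two basic consequences of $f\colon Y\to X$ being projective and birational with $X$ normal: its generic fiber is a single point, and (since $X$ is normal) its non-isomorphism locus has image of codimension $\geq 2$ in $X$. Let $U\subset X$ denote the largest open over which $f$ is an isomorphism; then $\codim_X(X\setminus U)\geq 2$ by Zariski's main theorem.

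For \eqref{base_lem1}, I would invoke the standard criterion that for a projective morphism between quasi-projective varieties, a $\Q$-Cartier $\Q$-divisor $D$ on the source is big over the target if and only if $D|_{Y_\eta}$ is big on the generic fiber $Y_\eta$. Since $f$ is birational, $Y_\eta$ is a single reduced point, so any $\Q$-divisor restricts there to the zero divisor on a $0$-dimensional scheme, which is trivially big. Hence every $\Q$-divisor on $Y$ is big over $X$.

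For \eqref{base_lem2}, the key claim is $\B_+(\Delta_Y/X)\cap f^{-1}(U)=\emptyset$. Since augmented base loci are compatible with restriction to opens of the base, it suffices to show $\B_+(\Delta_Y|_{f^{-1}(U)}/U)=\emptyset$. Over $U$ the morphism $f$ is an isomorphism with zero-dimensional fibers, so every $\Q$-Cartier $\Q$-divisor on $f^{-1}(U)$ is vacuously $f$-ample, and the relative relation $\sim_{\Q,U}$ becomes the trivial equivalence on $\Q$-Cartier $\Q$-divisors of $f^{-1}(U)$; taking the zero divisor as an effective representative shows every stable base locus over $U$ is empty. Thus $\B_+(\Delta_Y/X)\subset f^{-1}(X\setminus U)$. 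On the other hand, each component $\Delta_i$ of $\Supp\Delta_Y$ is the strict transform $f^{-1}_*\Delta_i^X$ of a codimension-one component $\Delta_i^X$ of $\Supp\Delta$; since $X\setminus U$ has codimension $\geq 2$ in $X$, the generic point of $\Delta_i^X$ lies in $U$, so the generic point of $\Delta_i$ lies in $f^{-1}(U)$ and in particular $\Delta_i\not\subset\B_+(\Delta_Y/X)$.

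I do not foresee any serious obstacle: the only subtle points are confirming that the relative augmented base locus commutes with restriction to an open of the base and that $f$-ampleness becomes vacuous once $f$ is an isomorphism. Both are immediate from the definitions used in \cite{BCHM}, and the rest is formal manipulation with relative $\Q$-linear equivalence.
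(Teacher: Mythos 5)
Your part (1) is fine (and is exactly what the paper does: the generic fiber of the birational morphism $f$ is a point, so relative bigness is automatic). The problem is in part (2): your whole argument hinges on the assertion that $\B_+(\Delta_Y/X)\cap f^{-1}(U)$ can be computed as $\B_+\bigl(\Delta_Y|_{f^{-1}(U)}/U\bigr)$, and the direction you actually need is not the formal one. With the definition in \cite{BCHM}, every effective divisor $C\sim_{\Q,X}\Delta_Y-\epsilon A$ restricts to a member of the relative linear system over $U$, which only yields $\B_+\bigl(\Delta_Y|_{f^{-1}(U)}/U\bigr)\subseteq\B_+(\Delta_Y/X)\cap f^{-1}(U)$ --- the opposite containment from the one you use. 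To conclude that the generic point of a component $S$ of $\Supp\Delta_Y$ is outside $\B_+(\Delta_Y/X)$, you must produce an effective divisor on all of $Y$, $\Q$-linearly equivalent to $\Delta_Y-\epsilon A$ over $X$, missing that generic point; this does not follow ``immediately from the definitions,'' because the relative equivalence you exploit over $U$ (namely $(\Delta_Y-\epsilon A)|_{f^{-1}(U)}\sim_{\Q,U}0$) twists by a $\Q$-Cartier divisor on $U$ that need not extend to a $\Q$-Cartier divisor on $X$: $X$ is only normal, not $\Q$-factorial, so members of the relative linear system over $U$ need not extend to members over $X$. As written, the central step of your proof is therefore a gap, not a routine verification.

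The localization statement you want is in fact true over a quasi-projective base, but its proof is essentially the paper's argument, which you would need to reproduce rather than cite: choose $m$ with $m(\Delta_Y-\epsilon A)$ Cartier, note that $f_*\sO_Y(m(\Delta_Y-\epsilon A))$ is a rank-one sheaf whose formation commutes with restriction to opens of $X$ and whose counit map is an isomorphism where $f$ is an isomorphism (in particular at the generic point of $f(S)$ for every component $S$ of $\Supp\Delta_Y$), then twist by $\sO_X(lH)$ with $H$ ample on $X$ and $l\gg0$ to make it globally generated, and take a general global section. Its divisor is effective, $\sim_{\Q,X}\Delta_Y-\epsilon A$, and does not contain any $S$ precisely because $f$ is an isomorphism at the generic point of $S$; hence $S\not\subset\B_+(\Delta_Y/X)$. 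In other words, the pushforward-plus-ample-twist mechanism is the actual content of the lemma; your proposal defers exactly that content to an unproved compatibility claim, so you should either prove that compatibility by this argument or argue directly as above.
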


\begin{proof}
\eqref{base_lem1} is obvious. We prove \eqref{base_lem2}. 
Take a divisor $A$ on $Y$ which is ample over $X$, take a sufficiently small 
rational number $0<\epsilon\ll 1$, and take $m\in\Z_{>0}$ such that 
$m(\Delta_Y-\epsilon A)$ is Cartier. Then the sheaf 
$f_*\sO_Y(m(\Delta_Y-\epsilon A))$ is of rank one. 
Take a general global section of the sheaf 
$f_*\sO_Y(m(\Delta_Y-\epsilon A))\otimes\sO_X(lH)$, where $H$ is ample on $X$ 
and $l\gg 0$. 
Then the pullback of the global section on $Y$ does not contain 
any irreducible component $S$ of $\Supp\Delta_Y$ since $f$ is an 
isomorphism at the generic point of $S$. 
Thus $S\not\subset\B_+(\Delta_Y/X)$ holds.
\end{proof}

As a corollary, we get the following theorem of \cite{BCHM}. 
We give a proof for the reader's convenience. 
We remark that this theorem is a direct consequence of 
\cite[Corollary 1.4.2 and Lemma 3.7.5]{BCHM}. See also \cite[Theorem 1.31]{SingBook}.

\begin{thm}[{\cite{BCHM}}]\label{canmodel_thm}
For any normal pair $(X, \Delta)$, there exists a canonical modification $f\colon Y\to X$ 
of $(X, \Delta)$ and is unique. 
\end{thm}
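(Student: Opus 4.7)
The plan is to reduce to the quasi-projective setting and then directly feed the set-up into Theorem \ref{BCHM_thm} via Lemma \ref{base_lem}, using Lemma \ref{OX22} to identify the resulting canonical model with the canonical modification. Uniqueness is already supplied by Proposition \ref{OX23}.

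Since uniqueness holds, existence is local on $X$: if we can build $f\colon Y\to X$ over each member of an affine cover, the pieces glue canonically. So I may assume from the outset that $X$ is quasi-projective (in fact affine). By Hironaka, choose a projective log resolution $\tilde f\colon\tilde Y\to X$ of $(X,\Delta)$ such that $\Supp\Delta_{\tilde Y}$ is a \emph{smooth} divisor on $\tilde Y$, where $\Delta_{\tilde Y}:=\tilde f^{-1}_*\Delta$. Because $\tilde Y$ is smooth and $\Supp\Delta_{\tilde Y}$ is smooth, the pair $(\tilde Y,\Delta_{\tilde Y})$ is log smooth, hence $\Q$-factorial and dlt, and its lc centers are $\tilde Y$ itself together with the (disjoint) irreducible components of $\lfloor\Delta_{\tilde Y}\rfloor$.

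Next I want to apply Theorem \ref{BCHM_thm} to $(\tilde Y,\Delta_{\tilde Y})$ with $U:=X$. By Lemma \ref{base_lem}\eqref{base_lem1}, both $\Delta_{\tilde Y}$ and $K_{\tilde Y}+\Delta_{\tilde Y}$ are big over $X$. By Lemma \ref{base_lem}\eqref{base_lem2}, the augmented base locus $\B_+(\Delta_{\tilde Y}/X)$ contains no irreducible component of $\Supp\Delta_{\tilde Y}$, and in particular no component of $\lfloor\Delta_{\tilde Y}\rfloor$; since $\Delta_{\tilde Y}$ is big over $X$, $\B_+(\Delta_{\tilde Y}/X)\neq\tilde Y$ either, so no lc center of $(\tilde Y,\Delta_{\tilde Y})$ is contained in $\B_+(\Delta_{\tilde Y}/X)$. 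Theorem \ref{BCHM_thm} therefore produces a good minimal model $(Y^m,\Delta_{Y^m})$ of $(\tilde Y,\Delta_{\tilde Y})$ over $X$, with $K_{Y^m}+\Delta_{Y^m}$ semiample over $X$.

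Taking the relative ample model
\[
Y:=\Proj_X\bigoplus_{m\geq 0}\tilde f_*\sO_{\tilde Y}\bigl(\lfloor m(K_{\tilde Y}+\Delta_{\tilde Y})\rfloor\bigr)
\]
gives a canonical model of $(\tilde Y,\Delta_{\tilde Y})$ over $X$ in the sense of \cite[\S 3.8]{KoMo}, so Lemma \ref{OX22} identifies $Y\to X$ as a canonical modification of $(X,\Delta)$. Combined with Proposition \ref{OX23}, the local pieces are canonically identified on overlaps and glue to a canonical modification of the original, possibly non-quasi-projective, pair $(X,\Delta)$. The one delicate point I expect is verifying the lc-center hypothesis in Theorem \ref{BCHM_thm}, which is why I insist on choosing a resolution whose exceptional and strict-transform components together form an \emph{snc} divisor with $\Supp\Delta_{\tilde Y}$ smooth: this pins down the lc centers exactly as the components of $\lfloor\Delta_{\tilde Y}\rfloor$, after which Lemma \ref{base_lem}\eqref{base_lem2} applies verbatim.
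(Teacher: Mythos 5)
Your proposal is correct and follows essentially the same route as the paper: reduce to the quasi-projective case via uniqueness (Proposition \ref{OX23}), take a projective log resolution with $\Supp\Delta_{\tilde Y}$ smooth, apply Theorem \ref{BCHM_thm} using Lemma \ref{base_lem} to rule out lc centers in $\B_+(\Delta_{\tilde Y}/X)$, pass to the canonical model, and invoke Lemma \ref{OX22}. The extra care you take with the gluing step and with the generic point as a potential lc center is harmless and consistent with the paper's argument.
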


\begin{proof}
By Proposition \ref{OX23}, we can assume that $X$ is quasi-projective. 
Let $\tilde{f}\colon\tilde{Y}\to X$ be a projective log resolution of $(X, \Delta)$ 
such that $\Supp\Delta_{\tilde{Y}}\subset\tilde{Y}$ is a smooth divisor, 
where $\Delta_{\tilde{Y}}:=\tilde{f}^{-1}_*\Delta$. 
Then the pair $(\tilde{Y}, \Delta_{\tilde{Y}})$ is $\Q$-factorial 
and has canonical singularities. 
Hence the set of lc centers of $(\tilde{Y}, \Delta_{\tilde{Y}})$ is equal to 
the set of irreducible components of $\lfloor\Delta_{\tilde{Y}}\rfloor$. 
By Lemma \ref{base_lem}, any irreducible component of $\lfloor\Delta_{\tilde{Y}}\rfloor$ 
is not contained in $\B_+(\Delta_{\tilde{Y}}/X)$. 
Thus we can run $(K_{\tilde{Y}}+\Delta_{\tilde{Y}})$-MMP with ample scaling over $X$ 
and induces a good minimal model over $X$
by Theorem \ref{BCHM_thm}. 
Hence there exists the canonical model $Y$ of $(\tilde{Y}, \Delta_{\tilde{Y}})$ 
over $X$. By Lemma \ref{OX22}, the morphism $Y\to X$ is the canonical modification 
of $(X, \Delta)$. 
\end{proof}

\begin{proposition}[{cf.\ \cite[Corollary 2.1]{OX}}]\label{OX29}
Let $(X, \Delta+S)$ be a normal pair with $S=\lfloor S\rfloor$ and 
let $f\colon Y\to X$ be the canonical modification of $(X, \Delta+S)$. 
Set $S_Y:=f^{-1}_*S$ and let $\nu_S\colon \bar{S}\to S$ be the normalization. 
Then the morphism $f$ induces the birational morphism 
$f_{\bar{S}}\colon S_Y\to\bar{S}$ and the morphism $f_{\bar{S}}$ is 
the canonical modification of $(\bar{S}, 0)$. 
\end{proposition}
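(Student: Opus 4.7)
The plan is to unpack the definition of canonical modification for $(X,\Delta+S)$ and read off the adjunction data needed to recognize $f_{\bar{S}}$ as a canonical modification. Since $f$ is a canonical modification of $(X,\Delta+S)$, the pair $(Y,\Delta_Y+S_Y)$ has canonical singularities (where $\Delta_Y:=f^{-1}_*\Delta$) and $K_Y+\Delta_Y+S_Y$ is ample over $X$. Apply Remark \ref{scstdfn_rmk}\eqref{scstdfn_rmk1} to $(Y,\Delta_Y+S_Y)$: this gives simultaneously that $S_Y$ is normal, that $\Diff_{S_Y}\Delta_Y=0$, and that $(S_Y,0)$ has canonical singularities. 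These three outputs are the whole content we need from the hypothesis.

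With $S_Y$ normal, the strict transform morphism $f|_{S_Y}\colon S_Y\to S$ is proper and birational, and the universal property of the normalization $\nu_S\colon\bar{S}\to S$ produces a unique factorization $f_{\bar{S}}\colon S_Y\to\bar{S}$ which is again proper and birational. By adjunction together with $\Diff_{S_Y}\Delta_Y=0$,
\[
K_{S_Y}=(K_Y+S_Y+\Delta_Y)|_{S_Y}.
\]
Since $K_Y+S_Y+\Delta_Y$ is ample over $X$, its restriction $K_{S_Y}$ is ample over $f(S_Y)=S$; because $\nu_S$ is finite (hence affine), this property descends: for any affine open $U\subset S$ the preimage $\nu_S^{-1}(U)\subset\bar{S}$ is affine, and $f_{\bar{S}}^{-1}\nu_S^{-1}(U)=(f|_{S_Y})^{-1}(U)$, so $K_{S_Y}$ is ample over $\bar{S}$ as well.

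Assembling these facts: $f_{\bar{S}}\colon S_Y\to\bar{S}$ is a proper birational morphism from the normal variety $S_Y$ to the normal variety $\bar{S}$, $(S_Y,0)$ has canonical singularities, and $K_{S_Y}$ is $f_{\bar{S}}$-ample. This is exactly the definition of a canonical modification of $(\bar{S},0)$, and uniqueness (Proposition \ref{OX23}) then identifies it with \emph{the} canonical modification. I do not expect any real obstacle: the whole argument is formal once the remark is invoked, and the only two points that deserve explicit verification are the normality of $S_Y$ (which lets the factorization through $\bar{S}$ exist and makes adjunction applicable) and the passage of ampleness from ``over $S$'' to ``over $\bar{S}$'' through the finite map $\nu_S$.
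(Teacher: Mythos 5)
Your argument is correct and is essentially the paper's own proof: both invoke Remark \ref{scstdfn_rmk} \eqref{scstdfn_rmk1} to get that $S_Y$ is normal, $\Diff_{S_Y}\Delta_Y=0$ and $(S_Y,0)$ is canonical, then use adjunction to identify $K_{S_Y}=(K_Y+\Delta_Y+S_Y)|_{S_Y}$ and conclude via relative ampleness over $\bar{S}$. You merely spell out two routine points the paper leaves implicit (the factorization through $\nu_S$ and the descent of ampleness through the finite map), which is fine.
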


\begin{proof}
By Remark \ref{scstdfn_rmk}, the pair $(S_Y, 0)$ has canonical singularities 
(in particular, $S_Y$ is normal) and $K_{S_Y}=(K_Y+\Delta_Y+S_Y)|_{S_Y}$ is ample 
over $\bar{S}$. 
Thus the morphism $f_{\bar{S}}$ is the canonical modification of $(\bar{S}, 0)$.
\end{proof}

\begin{lemma}[{cf.\ \cite[Lemma 3.1]{OX}}]\label{OX33}
Let $(X, \Delta)$ be a demi-normal pair. 
\begin{enumerate}
\renewcommand{\theenumi}{\arabic{enumi}}
\renewcommand{\labelenumi}{(\theenumi)}
\item\label{OX331}
A semi-canonical modification of $(X, \Delta)$ is unique, if exists. 
\item\label{OX332}
Let $f\colon Y\to X$ be the semi-canonical modification of $(X, \Delta)$, 
let $\nu_Y\colon\bar{Y}\to Y$ and $\nu_X\colon\bar{X}\to X$ be the normalizations 
and let $\bar{f}\colon\bar{Y}\to\bar{X}$ be the morphism obtained by $f$. 
Then the morphism $\bar{f}$ is the canonical modification of 
$(\bar{X}, \Delta_{\bar{X}}+D_{\bar{X}})$, where 
$\Delta_{\bar{X}}:=(\nu_X)^{-1}_*\Delta$ and $D_{\bar{X}}$ is the conductor 
divisor of $\bar{X}/X$.
\end{enumerate}
\end{lemma}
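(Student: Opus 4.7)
The plan is to prove (2) first and deduce (1) from it together with Proposition \ref{OX23}. For (2), I will verify directly that $\bar{f}\colon\bar{Y}\to\bar{X}$ satisfies each condition in the definition of a canonical modification of $(\bar{X},\Delta_{\bar{X}}+D_{\bar{X}})$. The morphism $\bar{f}$ is proper and birational because $f$ and the two normalizations are; it is an isomorphism in codimension one over $\bar{X}$ because $\nu_X$ is finite, so the preimage of a codimension-two closed subset of $X$ has codimension at least two in $\bar{X}$. The hypothesis that $f$ is an isomorphism around the generic points of $D_Y$, together with the compatibility of strict transforms along the commutative square $f\circ\nu_Y=\nu_X\circ\bar{f}$, identifies $\bar{f}^{-1}_*(\Delta_{\bar{X}}+D_{\bar{X}})$ with $\Delta_{\bar{Y}}+D_{\bar{Y}}$, which is the boundary of a canonical pair by the semi-canonical hypothesis on $(Y,\Delta_Y)$ (Definition \ref{scst_dfn} \eqref{scst_dfn21}). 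Finally, the standard pullback formula
\[
\nu_Y^*(K_Y+\Delta_Y)=K_{\bar{Y}}+D_{\bar{Y}}+\Delta_{\bar{Y}}
\]
(valid because $K_Y+\Delta_Y$ is $\Q$-Cartier and $\Delta_Y$ avoids $D_Y$ generically) reduces the relative ampleness of the right-hand side over $\bar{X}$ to the relative ampleness of $K_Y+\Delta_Y$ over $X$, via the finiteness of both normalizations: covering $X$ by affines $V$ on which $K_Y+\Delta_Y$ is ample, the preimages $\nu_X^{-1}(V)$ form an affine cover of $\bar{X}$, and finite pullback preserves ampleness.

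For (1), let $f_1,f_2$ be two semi-canonical modifications of $(X,\Delta)$. By (2), both induced morphisms $\bar{f}_i\colon\bar{Y}_i\to\bar{X}$ are canonical modifications of $(\bar{X},\Delta_{\bar{X}}+D_{\bar{X}})$, so Proposition \ref{OX23} provides a unique isomorphism $\phi\colon\bar{Y}_1\to\bar{Y}_2$ over $\bar{X}$. Since the demi-normal structure on each $Y_i$ is equivalent to the data of its normalization together with the Galois involution $\iota_{Y_i}$ on $\bar{D}_{\bar{Y}_i}$, to conclude $Y_1\cong Y_2$ over $X$ it suffices to show that $\phi$ intertwines $\iota_{Y_1}$ and $\iota_{Y_2}$.

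The main (and essentially the only) technical obstacle is this final involution-matching step. The key input is the hypothesis built into Definition \ref{model_dfn} that each $f_i$ is an isomorphism around every generic point of $D_{Y_i}$: this forces $\iota_{Y_i}$, at each generic point of $\bar{D}_{\bar{Y}_i}$, to coincide with the pullback of $\iota_X$ through the birational identification $\bar{f}_i$ on a dense open of $\bar{D}_{\bar{X}}$. Because an involution on a normal variety is determined by its germs at the generic points of its irreducible components, this identifies $\iota_{Y_1}$ with $\iota_{Y_2}$ under $\phi$, yielding $Y_1\cong Y_2$ over $X$. Without the hypothesis on generic points of $D_Y$ one could in principle glue the common canonical modification of $(\bar{X},\Delta_{\bar{X}}+D_{\bar{X}})$ by genuinely different involutions, so this hypothesis is precisely what makes uniqueness hold.
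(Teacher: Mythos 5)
Your argument is correct and follows essentially the same route as the paper: part \eqref{OX332} via the pullback formula $\nu_Y^*(K_Y+\Delta_Y)=K_{\bar Y}+\Delta_{\bar Y}+D_{\bar Y}$ together with preservation of relative ampleness under the finite normalizations, and part \eqref{OX331} by combining uniqueness of the canonical modification (Proposition \ref{OX23}) with the fact that the gluing involution is determined generically by $\iota_X$. Your appeal to the equivalence between a demi-normal variety and the data of its normalization plus the involution is exactly the paper's citation of \cite[Proposition 5.3]{SingBook}, so no genuinely new ideas or gaps are involved.
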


\begin{proof}
Let $f\colon Y\to X$ be a semi-canonical modification of $(X, \Delta)$. 
Then $K_{\bar{Y}}+\Delta_{\bar{Y}}+D_{\bar{Y}}=\nu_Y^*(K_Y+\Delta_Y)$ is ample 
over $X$ and the pair $(\bar{Y}, \Delta_{\bar{Y}}+D_{\bar{Y}})$ has 
canonical singularities. 
Hence the morphism $\bar{f}$ is the canonical modification 
of $(\bar{X}, \Delta_{\bar{X}}+D_{\bar{X}})$. Thus we get \eqref{OX332} and 
$\bar{Y}$ is unique. On the other hand, the Galois involution 
$\iota_X\colon\bar{D}_{\bar{X}}\to\bar{D}_{\bar{X}}$ is extended to 
$\iota\colon D_{\bar{Y}}\to D_{\bar{Y}}$ uniquely, where $\bar{D}_{\bar{X}}$ is
the normalization of $D_{\bar{X}}$. 
Thus the quotient $\bar{Y}\to Y$ by $\iota$ is unique 
by \cite[Proposition 5.3]{SingBook}. 
\end{proof}

\begin{thm}[{=Theorem \ref{mainthm} \eqref{mainthm1}}]\label{mainthm_1}
For any demi-normal pair $(X, \Delta)$, the canonical modification 
of $(X, \Delta)$ exists and is unique $($up to isomorphism over $X$$)$. 
\end{thm}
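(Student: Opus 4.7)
The plan is to reduce to the normal case via normalization, apply Theorem \ref{canmodel_thm} on the normalization, and then glue along the conductor using the Galois involution, following the pattern of \cite[\S 3]{OX}.

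First, uniqueness is already established in Lemma \ref{OX33} \eqref{OX331}, so I only need to construct one. Let $\nu_X\colon \bar{X}\to X$ be the normalization and $D_{\bar{X}}$ the conductor divisor of $\bar{X}/X$; set $\Delta_{\bar{X}}:=(\nu_X)^{-1}_*\Delta$. Then $(\bar{X}, \Delta_{\bar{X}}+D_{\bar{X}})$ is a normal pair, so by Theorem \ref{canmodel_thm} it admits a (unique) canonical modification $\bar{f}\colon \bar{Y}\to \bar{X}$. Set $\Delta_{\bar{Y}}:=\bar{f}^{-1}_*\Delta_{\bar{X}}$ and $D_{\bar{Y}}:=\bar{f}^{-1}_*D_{\bar{X}}$. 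Since $K_{\bar{Y}}+\Delta_{\bar{Y}}+D_{\bar{Y}}$ is ample over $\bar{X}$, $\bar{f}$ is an isomorphism in codimension one over $\bar{X}$, and so in particular at every generic point of $D_{\bar{X}}$.

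The main construction step is to descend $\bar{Y}$ to a demi-normal variety $Y$ over $X$ by gluing along the conductor. Since $(\bar{Y}, \Delta_{\bar{Y}}+D_{\bar{Y}})$ has canonical singularities, Remark \ref{scstdfn_rmk} \eqref{scstdfn_rmk1} gives that $D_{\bar{Y}}$ is already normal and $(D_{\bar{Y}},0)$ has canonical singularities. By Proposition \ref{OX29}, the induced morphism $D_{\bar{Y}}\to \bar{D}_{\bar{X}}$ is the canonical modification of $(\bar{D}_{\bar{X}},0)$. By the uniqueness of canonical modifications (Proposition \ref{OX23}), the Galois involution $\iota_X\colon \bar{D}_{\bar{X}}\to \bar{D}_{\bar{X}}$ then lifts uniquely to an involution $\iota\colon D_{\bar{Y}}\to D_{\bar{Y}}$. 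We then invoke \cite[Proposition 5.3]{SingBook} to form the quotient $\bar{Y}\to Y$ by the equivalence relation generated by $\iota$; this produces a demi-normal variety $Y$ whose normalization is $\bar{Y}$, together with a morphism $f\colon Y\to X$ because the composition $\bar{Y}\to\bar{X}\to X$ is $\iota$-invariant at the generic points of $D_{\bar{Y}}$ by construction of $\iota$.

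It remains to check that $f\colon Y\to X$ has the required properties. By construction, $Y$ is demi-normal with $\nu_Y\colon \bar{Y}\to Y$ the normalization, $D_Y$ the conductor of $Y$, and the conductor divisor of $\bar{Y}/Y$ equals $D_{\bar{Y}}$; hence $f$ is an isomorphism in codimension one and an isomorphism at every generic point of $D_Y$. Setting $\Delta_Y:=f^{-1}_*\Delta$, we have $\nu_Y^*(K_Y+\Delta_Y)=K_{\bar{Y}}+\Delta_{\bar{Y}}+D_{\bar{Y}}$, so $K_Y+\Delta_Y$ is $\Q$-Cartier and its pullback under $\nu_Y$ is ample over $X$; thus $K_Y+\Delta_Y$ itself is ample over $X$ by descending ampleness along a finite surjection. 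Finally, $(Y,\Delta_Y)$ is semi-canonical by definition of semi-canonicity, since $(\bar{Y}, \Delta_{\bar{Y}}+D_{\bar{Y}})$ has canonical singularities.

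The main obstacle is the gluing step: showing that the involution $\iota_X$ on $\bar{D}_{\bar{X}}$ extends to an involution on the conductor of $\bar{Y}$ and that the quotient is well-behaved. This is made possible precisely by Proposition \ref{OX29}, which identifies the conductor $D_{\bar{Y}}$ with the canonical modification of $\bar{D}_{\bar{X}}$, so that uniqueness of canonical modifications furnishes the required lift automatically. Once this is in place, the verification that the quotient $Y$ enjoys the desired properties is a standard descent argument using \cite[\S 5]{SingBook}.
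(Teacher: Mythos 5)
Your proposal is correct and follows essentially the same route as the paper: normalize, take the canonical modification of $(\bar{X},\Delta_{\bar{X}}+D_{\bar{X}})$ via Theorem \ref{canmodel_thm}, lift the involution using Proposition \ref{OX29} and uniqueness of canonical modifications, and glue along the conductor via Koll\'ar's theory. The only quibble is the citation for the gluing step: the existence of the quotient as a demi-normal pair with $\Q$-Cartier ample $K_Y+\Delta_Y$ rests on \cite[Corollary 5.33, Corollary 5.37, Theorem 5.38]{SingBook} (the paper uses \cite[Proposition 5.3]{SingBook} only for uniqueness of the quotient, in Lemma \ref{OX33}).
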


\begin{proof}
Let $\nu_X\colon\bar{X}\to X$ be the normalization and let 
$\Delta_{\bar{X}}:=(\nu_X)^{-1}_*\Delta$. 
By Theorem \ref{canmodel_thm}, there exists the canonical modification 
$\bar{f}\colon\bar{Y}\to\bar{X}$ of $(\bar{X}, \Delta_{\bar{X}}+D_{\bar{X}})$. 
Set $\Delta_{\bar{Y}}:=\bar{f}^{-1}_*\Delta_{\bar{X}}$ and 
$D_{\bar{Y}}:=\bar{f}^{-1}_*D_{\bar{X}}$. 
By Proposition \ref{OX29}, the morphism 
$\bar{f}_{\bar{D}_{\bar{X}}}\colon D_{\bar{Y}}\to\bar{D}_{\bar{X}}$ is the canonical 
modification of $(\bar{D}_{\bar{X}}, 0)$, where $\bar{D}_{\bar{X}}$ is
the normalization of $D_{\bar{X}}$. 
Hence the involution $\iota_X\colon\bar{D}_{\bar{X}}\to\bar{D}_{\bar{X}}$ can be 
extended to the involution $\iota\colon D_{\bar{Y}}\to D_{\bar{Y}}$. 
Since $K_{\bar{Y}}+\Delta_{\bar{Y}}+D_{\bar{Y}}$ is ample over $X$, 
there exists a semi-canonical pair $(Y, \Delta_Y)$ over $X$ such that 
the normalization of $Y$ and $D_{\bar{Y}}$ is exactly same as the 
conductor divisor of $\bar{Y}/Y$ by 
\cite[Corollary 5.37, Corollary 5.33 and Theorem 5.38]{SingBook}. 
The morphism $Y\to X$ is exactly the semi-canonical modification of $(X, \Delta)$.
\end{proof}

\section{Semi-terminal modifications}\label{st_section}

In this section, we prove Theorem \ref{mainthm} \eqref{mainthm2}. 
Let $(X, \Delta)$ be an arbitrary quasi-projective demi-normal pair. 
We show that there exists a projective semi-terminal modification of $(X, \Delta)$.

\subsection{Semi-log-resolution}\label{st_step1}

By \cite[Theorem 10.54]{SingBook}, there exists a projective and birational morphism 
$f\colon Y\to X$ such that the following properties hold:
\begin{enumerate}
\renewcommand{\theenumi}{\roman{enumi}}
\renewcommand{\labelenumi}{(\theenumi)}
\item\label{slr1}
$Y$ is semi-smooth. 
\item\label{slr2}
$f$ is an isomorphism in codimension $1$ over $X$ and $f$ is an isomorphism 
at any generic point of $D_Y$. 
\item\label{slr3}
$\Supp\Delta_Y$ is contained in the smooth locus of $Y$ and 
$\Supp\Delta_Y$ is a smooth divisor, where $\Delta_Y:=f^{-1}_*\Delta$. 
\item\label{slr4}
Let $\bar{f}\colon\bar{Y}\to\bar{X}$ be the morphism obtained by the normalizations 
$\nu_X\colon\bar{X}\to X$ and $\nu_Y\colon\bar{Y}\to Y$. 
Then the morphism $\bar{f}$ is a projective log resolution of the pair 
$(\bar{X}, \Delta_{\bar{X}}+D_{\bar{X}})$, where 
$\Delta_{\bar{X}}$ is the strict transform of $\Delta$.
\end{enumerate}

We fix a Cartier divisor $H$ on $Y$ which is ample over $X$ 
such that $K_Y+\Delta_Y+H$ is nef over $X$.

\subsection{Running a reducible MMP with scaling}\label{st_step2}

In this section, we will construct inductively the following (for $i\geq 0$):

\begin{enumerate}
\renewcommand{\theenumi}{\arabic{enumi}}
\renewcommand{\labelenumi}{(\theenumi)}
\item\label{smmp1}
Projective and birational morphisms 
\[
Y_i\xrightarrow{\pi_i}W_i\xleftarrow{d_i}W_i^d\xleftarrow{\pi_i^+}Y_{i+1}
\]
over $X$ such that 
all of them are isomorphisms in codimension $1$ over its images and are isomorphisms 
at all generic points of the conductor divisors. 
\item\label{smmp2}
A rational number $\lambda_i$ such that 
$0<\lambda_i\leq 1$ and $\lambda_0\geq\lambda_1\geq\cdots$.
\item\label{smmp3}
A $\Q$-Cartier $\Q$-divisor $H_i$ on $Y_i$, a 
positive integer $l_i$ and 
an invertible sheaf $\sL_i$ on $W_i$ which is nef over $X$. 
\end{enumerate}
The properties are the following: 
\begin{enumerate}
\renewcommand{\theenumi}{\roman{enumi}}
\renewcommand{\labelenumi}{(\theenumi)}
\item\label{smmp4}
$Y_0=Y$ and $H_0=H$ holds. 
The pair $(Y_i, \Delta_i)$ is semi-terminal, 
where $\Delta_i$ is the strict transform of $\Delta$. 
\item\label{smmp5}
The following holds. 
\[
\lambda_i=\inf\{\lambda\in\R_{\geq 0}\, |\, K_{Y_i}+\Delta_i+\lambda H_i 
\text{ is nef over }X\}.
\]
The morphism $\pi_i$ is the contraction morphism associated to a 
$(K_{Y_i}+\Delta_i)$-negative extremal ray $R_i\subset\overline{\NE}(Y_i/X)$ such that 
$(K_{Y_i}+\Delta_i+\lambda_iH_i\cdot R_i)=0$. 
\item\label{smmp6}
The morphism $d_i$ is the demi-normalization of $W_i$. 
The morphism $\pi_i^+$ is the semi-canonical modification of $(W_i^d, \Delta_{W_i^d})$, 
where $\Delta_{W_i^d}$ is the strict transform of $\Delta$. 
\item\label{smmp7}
The following holds:
\begin{eqnarray*}
\pi_i^*\sL_i & \simeq & \sO_{Y_i}(l_i(K_{Y_i}+\Delta_i+\lambda_iH_i)), \\
(d_i\circ\pi_i^+)^*\sL_i & \simeq & \sO_{Y_{i+1}}(l_i(K_{Y_{i+1}}+\Delta_{i+1}+
\lambda_iH_{i+1})).
\end{eqnarray*}
\end{enumerate}

\noindent\textbf{Construction.}
Set $Y_0:=Y$ and $H_0:=H$. 

Assume that we have constructed $Y_i$ and $H_i$ 
(and also $\lambda_{i-1}$, $l_{i-1}$ and $\sL_{i-1}$, if $i\geq 1$). 
If $K_{Y_i}+\Delta_i$ is nef over $X$, then we stop the program and go to 
Section \ref{st_step5}. 

We consider the case that $K_{Y_i}+\Delta_i$ is not nef over $X$. Set 
$\lambda_i$ as in \eqref{smmp5}.
If $i=0$, then $K_{Y_i}+\Delta_i+H_i$ is nef over $X$ by definition. 
If $i\geq 1$, then $(d_{i-1}\circ\pi_{i-1}^+)^*\sL_{i-1}$ is nef over $X$ by induction. 
Hence $K_{Y_i}+\Delta_i+\lambda_{i-1}H_i$ is nef over $X$. 
Thus $0<\lambda_i\leq 1$, and $\lambda_i\leq\lambda_{i-1}$ if $i\geq 1$. 
We can find a $(K_{Y_i}+\Delta_i)$-negative extremal ray 
$R_i\subset\overline{\NE}(Y_i/X)$ with 
$(K_{Y_i}+\Delta_i+\lambda_{i+1}H_i\cdot R_i)=0$, 
and we can get the contraction morphism over $X$ with respect to $R_i$ by 
\cite[Theorem 1.19]{fujino}. 
In particular, $\lambda_i$ is a rational number. 
Let $\pi_i\colon Y_i\to W_i$ be the corresponding 
contraction morphism over $X$. The morphism $\pi_i$ is a projective and birational 
morphism. 
Since $(K_{Y_i}+\Delta_i+\lambda_iH_i\cdot R_i)=0$, 
we can find a positive integer $l_i$ and an invertible sheaf 
$\sL_i$ on $W_i$ such that 
$\pi_i^*\sL_i\simeq\sO_{Y_i}(l_i(K_{Y_i}+\Delta_i+\lambda_iH_i))$ by the Contraction 
theorem \cite[Theorem 1.19]{fujino}. 
Since $\pi_i^*\sL_i$ is nef over $X$, $\sL_i$ is also nef over $X$. 
We can take the demi-normalization $d_i\colon W_i^d\to W_i$ 
since $W_i$ is dnc outside codimension $2$. 
Let $\pi_i^+\colon Y_{i+1}\to W_i^d$ be the semi-canonical modification of 
$(W_i^d, \Delta_{W_i^d})$, where $\Delta_{W_i^d}$ is the strict transform of $\Delta$. 
(We note that $\pi_i^+$ exists and is unique 
by Theorem \ref{mainthm} \eqref{mainthm1}.)
We take a $\Q$-divisor $H_{i+1}$ on $Y_{i+1}$ such that the following holds:
\[
(d_i\circ\pi_i^+)^*\sL_i\simeq\sO_{Y_{i+1}}(l_i(K_{Y_{i+1}}+\Delta_{i+1}+
\lambda_iH_{i+1})).
\]
The $\Q$-divisor $H_{i+1}$ is $\Q$-Cartier since $K_{Y_{i+1}}+\Delta_{i+1}$ is 
$\Q$-Cartier.

\begin{claim}\label{claim1}
The pair $(Y_{i+1}, \Delta_{i+1})$ is semi-terminal. 
\end{claim}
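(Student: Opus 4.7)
By construction, $\pi_i^+$ is the semi-canonical modification of $(W_i^d, \Delta_{W_i^d})$, so $(Y_{i+1}, \Delta_{i+1})$ is semi-canonical and the normalized pair $(\bar{Y}_{i+1}, \Delta_{\bar{Y}_{i+1}} + D_{\bar{Y}_{i+1}})$ has canonical singularities. The content of the claim is to promote this to the strict discrepancy condition in the definition of semi-terminality, which we plan to do by comparing discrepancies with $(\bar{Y}_i, \Delta_{\bar{Y}_i} + D_{\bar{Y}_i})$, known to be semi-terminal by \eqref{smmp4}.

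Fix a common log resolution $r\colon Z \to \bar{W}_i$ factoring through $\bar{Y}_i$ via $p\colon Z\to \bar{Y}_i$ and through $\bar{Y}_{i+1}$ via $q\colon Z\to\bar{Y}_{i+1}$. Passing the identities \eqref{smmp7} to normalizations, pulling back to $Z$ and subtracting gives
\[
F := p^*(K + \Delta + D)_{\bar{Y}_i} - q^*(K + \Delta + D)_{\bar{Y}_{i+1}} = \lambda_i\bigl(q^*H_{\bar{Y}_{i+1}} - p^*H_{\bar{Y}_i}\bigr).
\]
The key step is to show that $F$ is an effective $\Q$-divisor supported on the $q$-exceptional locus, with strictly positive coefficient along every $q$-exceptional prime divisor of $Z$. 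We would derive this from the negativity lemma applied to $q$ (\cite[Lemma 3.39]{KoMo}): the extremality relations $(K+\Delta+\lambda_i H)\cdot R_i=0$ and $(K+\Delta)\cdot R_i<0$ force $H_{\bar{Y}_i}$ to be $\bar{\pi}_i$-ample, while Lemma \ref{OX33} identifies $\bar{\pi}_i^+\bar{d}_i$ as the canonical modification of $(\bar{W}_i, \Delta_{\bar{W}_i} + D_{\bar{W}_i})$, so that $-H_{\bar{Y}_{i+1}}$ is $(\bar{\pi}_i^+\bar{d}_i)$-ample. These two positivities make $-F$ nef on $q$-fibers and ample on $q$-exceptional curves, from which the (strict) effectivity of $F$ follows.

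Granting this, $a(E, \bar{Y}_{i+1}, \Delta+D) > a(E, \bar{Y}_i, \Delta+D)$ for every prime divisor $E$ on $Z$ exceptional over $\bar{Y}_{i+1}$. When $E$ is also exceptional over $\bar{Y}_i$, semi-canonicity of $(Y_i, \Delta_i)$ gives $a(E, \bar{Y}_i, \Delta+D)\geq 0$, hence $a(E, \bar{Y}_{i+1}, \Delta+D) > 0$ and the semi-terminal inequality holds at $E$. The main obstacle arises only when $\bar{\pi}_i$ is a divisorial contraction: $E$ is then a prime divisor of $\bar{Y}_i$ contracted to codimension $\geq 2$ in $\bar{Y}_{i+1}$, with $a(E, \bar{Y}_i, \Delta+D) = -c$ for $c\in[0,1]$ the coefficient of $E$ in $\Delta_{\bar{Y}_i} + D_{\bar{Y}_i}$. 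The case $c=0$ follows at once from the strict inequality; the case $c=1$ (so $E$ lies in the floor of $\bar{Y}_i$) requires verifying that the center of $E$ in $\bar{Y}_{i+1}$ is contained in $\lfloor\Delta_{\bar{Y}_{i+1}} + D_{\bar{Y}_{i+1}}\rfloor$ and has codimension exactly $2$, exhibiting the semi-terminal exception, using that floor components other than the contracted $E$ correspond bijectively under the birational map $\bar{Y}_i\dashrightarrow \bar{Y}_{i+1}$. The intermediate case $c\in(0,1)$ demands a sharper lower bound on the jump $a_{i+1}(E) - a_i(E)$, which we expect to obtain from a quantitative form of the negativity lemma incorporating the $\bar{\pi}_i$-ample value of $H_{\bar{Y}_i}$ along fibers sweeping out $E$.
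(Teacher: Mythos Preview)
Your negativity-lemma setup is in the same spirit as the paper's, but the key claim that $F$ has \emph{strictly positive} coefficient along \emph{every} $q$-exceptional prime divisor is false, and this is exactly where the argument breaks. The correct statement (this is \cite[Lemma 3.38]{KoMo}, which the paper invokes) is that $a(E,\bar{Y}_{i+1},B_{i+1})\geq a(E,\bar{Y}_i,B_i)$, with strict inequality only when at least one of $\bar{\pi}_i$, $\bar{\pi}_i^+$ fails to be an isomorphism over the generic point of $\Center_{\bar{W}_i}E$. So for an exceptional $E$ over $\bar{Y}_{i+1}$ whose center lies in the locus where both maps are isomorphisms, you only get $a(E,\bar{Y}_{i+1},B_{i+1})=a(E,\bar{Y}_i,B_i)$, and this could well be $0$. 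That is precisely the case in which the semi-terminal exception must be verified, and your argument does not address it. The paper handles it directly: when both maps are local isomorphisms at $\Center_{\bar{W}_i}E$, one has $a(E,\bar{Y}_i,B_i)=0$, so semi-terminality of $(\bar{Y}_i,B_i)$ forces $\Center_{\bar{Y}_i}E\subset\lfloor B_i\rfloor$ with codimension $2$, and the local isomorphisms transport this to the same statement on $\bar{Y}_{i+1}$.

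Your ``main obstacle'' paragraph is chasing phantoms. By construction (property \eqref{smmp1} of Section \ref{st_step2}), each of $\pi_i$, $d_i$, $\pi_i^+$ is an isomorphism in codimension $1$ over its image and an isomorphism at every generic point of the conductor divisor; since moreover $\Delta_i$ is the strict transform of $\Delta$ from $X$, no component of $\Supp B_i=\Supp(\Delta_{\bar{Y}_i}+D_{\bar{Y}_i})$ is contracted by $\bar{\pi}_i$. Hence if $E$ is a prime divisor on $\bar{Y}_i$ that becomes exceptional over $\bar{Y}_{i+1}$, then $E\not\subset\Supp B_i$, i.e.\ $c=0$ automatically. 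The cases $c=1$ and $c\in(0,1)$ never occur, so no ``quantitative negativity lemma'' is needed. The paper records this as a one-line remark (``Thus $E\not\subset\Supp B_i$''), which together with canonicity of $(\bar{Y}_i,B_i)$ rules out the strict-inequality branch when $a(E,\bar{Y}_{i+1},B_{i+1})=0$, reducing everything to the local-isomorphism case above.
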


\begin{proof}[{Proof of Claim \ref{claim1}}]
Let $\bar{Y}_i$, $\bar{W_i}$ be the normalization of $Y_i$, $W_i$, respectively. 
We note that $\bar{W}_i$ is equal to the normalization of $W_i^d$ by 
Zariski's Main Theorem. 
Let 
\[
\bar{Y}_i\xrightarrow{\bar{\pi}_i}\bar{W}_i\xleftarrow{\bar{\pi}_i^+}\bar{Y}_{i+1}
\]
be the morphisms obtained by $\pi_i$ and $\pi_i^+$. 
Since the pair $(Y_{i+1}, \Delta_{i+1})$ is semi-canonical, 
it is enough to show that the pair $(\bar{Y}_{i+1}, B_{i+1})$ 
is semi-terminal, where $B_i$ is the sum of $D_{\bar{Y}_i}$ and the strict 
transform of $\Delta_i$. 
We know that $-(K_{\bar{Y}_i}+B_i)$ is ample over $\bar{W}_i$ 
and $K_{\bar{Y}_{i+1}}+B_{i+1}$ is ample over $\bar{W}_i$. 
Take any exceptional divisor $E$ over $\bar{Y}_{i+1}$ such that 
$a(E, \bar{Y}_{i+1}, B_{i+1})=0$ holds. 
It is enough to show that 
$\Center_{\bar{Y}_{i+1}}E\subset\lfloor B_{i+1}\rfloor$ 
and $\codim_{\bar{Y}_{i+1}}(\Center_{\bar{Y}_{i+1}}E)=2$. 
Assume that either $\bar{\pi}_i$ or $\bar{\pi}_i^+$ is not an isomorphism over 
the generic point of $\Center_{\bar{W}_i}E$. Then we have 
\[
a(E, \bar{Y}_{i}, B_i)<a(E, \bar{Y}_{i+1}, B_{i+1})
\]
by the negativity lemma \cite[Lemma 3.38]{KoMo}. 
Since the pair $(\bar{Y}_{i}, B_i)$ has canonical 
singularities, this leads to a contradiction. 
(We remark that $\bar{\pi}_i$, $\bar{\pi}_i^+$ are isomorphisms over 
the images of the generic point of all components of 
$\Supp B_i$, 
$\Supp B_{i+1}$, respectively. 
Thus $E\not\subset\Supp B_i$.)
Therefore both $\bar{\pi}_i$ are $\bar{\pi}_i^+$ are isomorphisms at 
the generic point of $\Center_{\bar{W}_i}E$. In particular, 
$a(E, \bar{Y}_{i}, B_i)=0$ holds. 
Since the pair $(\bar{Y}_{i}, B_i)$ is semi-terminal, 
we have $\Center_{\bar{Y}_{i}}E\subset\lfloor B_i\rfloor$ and 
$\codim_{\bar{Y}_i}(\Center_{\bar{Y}_i}E)=2$. 
Thus we have 
$\codim_{\bar{Y}_{i+1}}(\Center_{\bar{Y}_{i+1}}E)=2$ and 
$\Center_{\bar{Y}_{i+1}}E\subset\lfloor B_{i+1}\rfloor$. 
\end{proof}

Therefore, we can construct the objects in Section \ref{st_step2} 
\eqref{smmp1}, \eqref{smmp2} and \eqref{smmp3} inductively.

\subsection{Decomposing the MMP}\label{st_step3}

Let  $\phi_0\colon Z_{0,0}\to\bar{Y}_0$ be the identity morphism and 
let $H_{0,0}:=(\nu_{Y_0}\circ\phi_0)^*H_0$, where $\nu_{Y_0}\colon\bar{Y}_0\to Y_0$ is 
the normalization. 

In Section \ref{st_step3}, we prove the following claim. 

\begin{claim}\label{fund_claim}
Let $i\geq 0$ such that $K_{Y_i}+\Delta_i$ is not nef over $X$. 
Assume that there exists a projective and birational morphism 
$\phi_i\colon Z_{i,0}\to\bar{Y}_i$
$($we note that $\nu_{Y_i}\colon\bar{Y}_i\to Y_i$ is the normalization$)$
and a $\Q$-divisor $H_{i,0}$ on $Z_{i,0}$ 
such that the following properties hold:
\begin{enumerate}
\renewcommand{\theenumi}{\roman{enumi}}
\renewcommand{\labelenumi}{(\theenumi)}
\item\label{fund_claim1}
The variety $Z_{i,0}$ is normal and $\Q$-factorial.
\item\label{fund_claim2}
$K_{Z_{i,0}}+B_{i,0}=\phi^*(K_{\bar{Y}_i}+B_i)$
holds, where $B_i$ is the sum of $D_{\bar{Y}_i}$ and the strict transform of $\Delta_i$, 
and $B_{i,0}$ is the strict transform of $B_i$. In particular, the pair 
$(Z_{i,0}, B_{i,0})$ has canonical singularities. 
\item\label{fund_claim3}
$H_{i,0}\sim_{\Q}(\nu_{Y_i}\circ\phi_i)^*H_i$ holds.
\end{enumerate}
Then we have the following results:
\begin{enumerate}
\renewcommand{\theenumi}{\arabic{enumi}}
\renewcommand{\labelenumi}{(\theenumi)}
\item\label{fund_claim4}
We can run $(K_{Z_{i,0}}+B_{i,0})$-MMP
\[
Z_{i,0}\xrightarrow{\pi_{i,0}}V_{i,0}\xleftarrow{\pi_{i,0}^+}Z_{i,1}
\xrightarrow{\pi_{i,1}}V_{i,1}\xleftarrow{\pi_{i,1}^+}Z_{i,2}
\xrightarrow{\pi_{i,2}}\cdots.
\]
over $\bar{W}_i$, where $\bar{W}_i$ is the normalization of $W_i$. 
Let $B_{i,j}$, $H_{i,j}$ be the push forward of $B_{i,0}$, $H_{i,0}$ on $Z_{i,j}$, 
respectively. 
More precisely, for $j\geq 0$, the morphism $\pi_{i,j}$ is the contraction morphism 
associated to a $(K_{Z_{i,j}}+B_{i,j})$-negative extremal ray 
$R_{i,j}\subset\overline{\NE}(Z_{i,j}/\bar{W}_i)$ and the morphism $\pi_{i,j}^+$ is the 
identity morphism if $\pi_{i,j}$ is divisorial and the flip if $\pi_{i,j}$ is small. 
\item\label{fund_claim5}
Let $\sL_{i,j}$ be the pullback of $\sL_i$ on $Z_{i,j}$. Then we have the following:
\[
\sL_{i,j}\sim_{\Q}
l_i(K_{Z_{i,j}}+B_{i,j}+\lambda_iH_{i,j}).
\]
\item\label{fund_claim6}
If $K_{Z_{i,j}}+B_{i,j}$ is not nef over $\bar{W}_i$, then 
we have the following: 
\[
\lambda_i=\inf\{\lambda\in\R_{\geq 0}\, |\, K_{Z_{i,j}}+B_{i,j}
+\lambda H_{i,j} \text{ is nef over }X\}.
\]
\item\label{fund_claim7}
Assume that the sequence 
\[
Z_{i,0}\dashrightarrow Z_{i,1}\dashrightarrow\dots\dashrightarrow Z_{i,m_i}
\]
terminates. In other words, $K_{Z_{i,m_i}}+B_{i,m_i}$ is nef over $\bar{W}_i$. 
$($We will prove the termination in Section \ref{st_step4}.$)$
Then $m_i\in\Z_{>0}$. 
Set $Z_{i+1,0}:=Z_{i,m_i}$ and $H_{i+1,0}:=H_{i,m_i}$. 
Then there exists a projective and birational morphism 
$\phi_{i+1}\colon Z_{i+1,0}\to\bar{Y}_{i+1}$ such that the properties 
\eqref{fund_claim1}, \eqref{fund_claim2} and \eqref{fund_claim3} holds. 
\end{enumerate}
\end{claim}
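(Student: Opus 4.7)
The plan is to run a $(K+B)$-MMP on $Z_{i,0}$ over $\bar{W}_i$ with scaling of $H_{i,0}$, exploiting the normality and $\mathbb{Q}$-factoriality of $Z_{i,0}$ to invoke standard normal-pair MMP machinery, and then to use the uniqueness of the canonical modification to identify the resulting ample model with $\bar{Y}_{i+1}$.

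I first verify that $(Z_{i,0},B_{i,0})$ is $\mathbb{Q}$-factorial dlt: $\mathbb{Q}$-factoriality is assumed, and dlt-ness follows from the crepant identity $K_{Z_{i,0}}+B_{i,0}=\phi_i^*(K_{\bar{Y}_i}+B_i)$, since every exceptional divisor over $Z_{i,0}$ then has discrepancy $\geq 0>-1$. Pulling back the relation $\pi_i^*\sL_i\simeq\sO_{Y_i}(l_i(K_{Y_i}+\Delta_i+\lambda_iH_i))$ along $\nu_{Y_i}\circ\phi_i$, and invoking the adjunction $\nu_{Y_i}^*(K_{Y_i}+\Delta_i)=K_{\bar{Y}_i}+B_i$, I get the base case
\[
\sL_{i,0}\sim_{\mathbb{Q}}l_i(K_{Z_{i,0}}+B_{i,0}+\lambda_iH_{i,0}),
\]
so in particular $K_{Z_{i,0}}+B_{i,0}+\lambda_iH_{i,0}\equiv 0$ over $\bar{W}_i$.

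Next I run the MMP step by step. Existence of each extremal contraction is the Cone/Contraction theorem for $\mathbb{Q}$-factorial dlt pairs, and existence of flips is from \cite{BCHM}. Because $K+B+\lambda_iH$ is $\bar{W}_i$-numerically trivial, every contracted ray $R$ is $(K+B+\lambda_iH)$-trivial, so the scaling constant stays at $\lambda_i$ and both the canonical-singularity property and the pullback relation $\sL_{i,j}\sim_{\mathbb{Q}}l_i(K_{Z_{i,j}}+B_{i,j}+\lambda_iH_{i,j})$ persist through each step (both sides being pullbacks from $\bar{W}_i$). This establishes assertions (1) and (2). For (3): if the MMP has not yet terminated at step $j$, any $(K+B)$-negative extremal ray $R\subset\overline{\NE}(Z_{i,j}/\bar{W}_i)$ satisfies $H_{i,j}\cdot R>0$, so the numerical relation $K+B+\lambda H\equiv(\lambda-\lambda_i)H_{i,j}$ over $\bar{W}_i$ shows that for $\lambda<\lambda_i$ this divisor is negative on $R$, hence not nef over $\bar{W}_i$ nor over $X$ (as $\bar{W}_i\to X$ is a morphism, so every $\bar{W}_i$-contracted curve is $X$-contracted). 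At $\lambda=\lambda_i$ the divisor pulls back $\sL_i/l_i$, which is nef over $X$ by construction.

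Finally, granting termination, $K+B$ is nef over $\bar{W}_i$ on $Z_{i,m_i}$ and big (since $Z_{i,m_i}\to\bar{W}_i$ is birational), hence semiample by the Basepoint-Free theorem. The induced ample model over $\bar{W}_i$ is a canonical modification of $(\bar{W}_i,\Delta_{\bar{W}_i}+D_{\bar{W}_i})$, which by uniqueness (Proposition \ref{OX23} combined with Lemma \ref{OX33}) must coincide with $\bar{Y}_{i+1}$. Setting $\phi_{i+1}\colon Z_{i+1,0}=Z_{i,m_i}\to\bar{Y}_{i+1}$ to be the canonical morphism, hypotheses (i) and (ii) at level $i+1$ are immediate, and (iii) for $H_{i+1,0}:=H_{i,m_i}$ follows by comparing the preserved pullback relation with the defining identity $(d_i\circ\pi_i^+)^*\sL_i\simeq\sO_{Y_{i+1}}(l_i(K_{Y_{i+1}}+\Delta_{i+1}+\lambda_iH_{i+1}))$. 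The inequality $m_i\geq 1$ holds because the lift of the ray $R_i$ is a $(K+B)$-negative class on $Z_{i,0}$ over $\bar{W}_i$, forcing at least one MMP step. The main obstacle I anticipate is confirming that the $\mathbb{Q}$-linear (not merely numerical) equivalence of the pullback relation survives each flip; I will handle this by noting that both sides descend to the same sheaf on $\bar{W}_i$, so any MMP step relative to $\bar{W}_i$ preserves the equivalence exactly.
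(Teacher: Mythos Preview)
Your argument is correct and follows essentially the same strategy as the paper's: run a $(K+B)$-MMP on $Z_{i,0}$ over $\bar{W}_i$, verify that the scaling invariants persist, and identify the resulting canonical model with $\bar{Y}_{i+1}$. The differences are minor. For parts \eqref{fund_claim5}--\eqref{fund_claim6} the paper argues by explicit induction on $j$ (pushing forward $\Q$-linear equivalences along maps whose inverse contracts no divisors), whereas you use the cleaner observation that both sides descend to $\bar{W}_i$; these are equivalent. For part \eqref{fund_claim7}, the paper does \emph{not} invoke semiampleness on the minimal model directly. Instead it proves a sub-claim (Claim~\ref{s_claim}) that $\bar{Y}_{i+1}$ is the canonical model of $(Z_{i,0},B_{i,0})$ over $\bar{W}_i$ --- by passing to a log resolution $T\to Z_{i,0}$, using Lemmas~\ref{OX22} and~\ref{OX33}\,\eqref{OX332} to recognise $\bar{Y}_{i+1}$ as the canonical model of $(T,B_T)$, and then \cite[Corollary~3.53]{KoMo} --- after which \cite[Theorem~2.22]{utah} produces the morphism $\phi_{i+1}$ together with the crepant identity. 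Your route via the Basepoint-Free theorem is valid, but be aware that the standard klt version \cite[Theorem~3.24]{KoMo} does not apply verbatim since $\lfloor B_{i,m_i}\rfloor\neq 0$; you need either the dlt/lc version or the reduction to klt via \cite[Lemma~3.7.5]{BCHM} as packaged in Theorem~\ref{BCHM_thm}. The paper's detour through Claim~\ref{s_claim} sidesteps this issue by using the already-constructed $\bar{Y}_{i+1}$ as the target rather than manufacturing the ample model from $Z_{i,m_i}$.
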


\begin{proof}[{Proof of Claim \ref{fund_claim}}]

Since the pair $(Z_{i,0}, B_{i,0})$ is $\Q$-factorial and has canonical singularities, 
we can run the MMP which described in \eqref{fund_claim4}. 
We note that the flip $\pi_{i,j}^+$ exists 
if $\pi_{i,j}$ is small by \cite[Corollary 1.4.1]{BCHM}. 

Now we prove \eqref{fund_claim5} and \eqref{fund_claim6} by induction on $j$. 
Let $\lambda_{i,j}$ be the right-hand of the equality in \eqref{fund_claim6}.
We consider the case $j=0$. 
By \eqref{smmp7} in Section \ref{st_step2}, $\sL_{i,0}$ is isomorphic to 
\[
\sO_{Z_{i,0}}(l_i(K_{Z_{i,0}}+B_{i,0}+\lambda_i(\nu_{Y_i}\circ\phi_i)^*H_i)).
\]
Thus we prove \eqref{fund_claim5} 
for the case $j=0$ since $H_{Z_{i,0}}\sim_{\Q}(\nu_{Y_i}\circ\phi_i)^*H_i$. 
On the other hand, 
\begin{eqnarray*}
\lambda_{i,0} & = & \inf\{\lambda\in\R_{\geq 0}\, |\, 
\phi_i^*(K_{\bar{Y}_i}+B_i+\lambda\nu_{Y_i}^*H_i)
\text{ is nef over }X\}\\
 & = & \inf\{\lambda\in\R_{\geq 0}\, |\, 
K_{Y_i}+\Delta_i+\lambda H_i
\text{ is nef over }X\}=\lambda_i.
\end{eqnarray*}
Thus we prove \eqref{fund_claim6} for the case $j=0$.

We consider the case $j\geq 1$. 
Since the inverse of the birational map $Z_{i,j-1}\dashrightarrow Z_{i,j}$
does not contract divisors, 
we prove \eqref{fund_claim5} by induction. 
We note that $\sL_{i,j}$ is nef over $X$ since $\sL_i$ is nef over $X$. 
Thus $\lambda_{i,j}\leq\lambda_i$ holds. 
On the other hand, we know that 
$K_{Z_{i,j}}+B_{i,j}$ is not nef over $\bar{W}_{i,j}$, 
$K_{Z_{i,j}}+B_{i,j}+\lambda_iH_{i,j}\sim_{\Q,\bar{W}_i}0$ and $K_{Z_{i,j}}+B_{i,j}+
\lambda_{i,j}H_{i,j}$ is nef over $\bar{W}_{i,j}$. 
Thus $\lambda_{i,j}\geq\lambda_i$ holds. 
Therefore we prove \eqref{fund_claim6}. 

Assume that the MMP 
\[
Z_{i,0}\dashrightarrow Z_{i,1}\dashrightarrow Z_{i,2}\dashrightarrow\cdots
\]
over $\bar{W}_i$ terminates and induces a minimal model 
$(Z_{i,m_i}, B_{i,m_i})$ over $\bar{W}_i$. 
We remark that 
$K_{Z_{i,0}}+B_{i,0}$ is not nef over $\bar{W}_i$ 
since $-(K_{Y_i}+\Delta_i)$ is ample over $W_i$ and 
$K_{Z_{i,0}}+B_{i,0}$ is the pullback of $K_{Y_i}+\Delta_i$. Thus $m_i\in\Z_{>0}$ holds.

\begin{claim}\label{s_claim}
$\bar{Y}_{i+1}$ is the 
canonical model of the pair $(Z_{i,0}, B_{i,0})$ over $\bar{W}_i$. 
\end{claim}

\begin{proof}[Proof of Claim \ref{s_claim}]
Let $g\colon T\to Z_{i,0}$ be a projective log resolution of the pair 
$(Z_{i,0}, B_{i,0})$ such that $\Supp B_T\subset T$ 
is a smooth divisor, where $B_T$ is the strict transform 
of $B_{i,0}$. 
Then, by Lemmas \ref{OX22} and \ref{OX33} \eqref{OX332}, $\bar{Y}_{i+1}$
is the canonical model of the pair $(T, B_T)$ over $\bar{W}_i$. 
We can write 
$K_T+B_T=g^*(K_{Z_{i,0}}+B_{i,0})+F$ with $F$ 
effective and exceptional over $Z_{i,0}$ since the pair $(Z_{i,0}, B_{i,0})$ has 
canonical singularities. 
Thus $\bar{Y}_{i+1}$ is the 
canonical model of $(Z_{i,0}, B_{i,0})$ over $\bar{W}_i$ by 
\cite[Corollary 3.53]{KoMo}.
\end{proof}

By Claim \ref{s_claim} and \cite[Theorem 2.22]{utah}, there exists the unique 
projective and birational morphism 
$\phi_{i+1}\colon Z_{i,m_i}\to\bar{Y}_{i+1}$ with 
$\phi_{i+1}^*(K_{\bar{Y}_{i+1}}+B_{i+1})
=K_{Z_{i,m_i}}+B_{i,m_i}$. 
Set $Z_{i+1,0}:=Z_{i,m_i}$ and $H_{i+1}:=H_{i,m_i}$. We note that $B_{i,m_i}=B_{i+1,0}$.
By Claim \ref{s_claim} and Section \ref{st_step2}, 
\begin{eqnarray*}
 &   & \sO_{Z_{i+1,0}}(l_i(K_{Z_{i+1,0}}+B_{i+1,0}+\lambda_iH_{i+1,0}))\\
 & \simeq & (\sL_i)_{Z_{i+1,0}}
\simeq((\nu_{W_i}\circ\bar{\pi}_i^+)^*\sL_i)_{Z_{i+1,0}}\\
 & \simeq & \sO_{Z_{i+1,0}}(l_i(K_{Z_{i+1,0}}+B_{i+1,0}+
\lambda_i(\nu_{Y_{i+1}}\circ\phi_{i+1})^*H_{i+1}))
\end{eqnarray*}
holds, where $(\sL_i)_{Z_{i+1,0}}$, $((\nu_{W_i}\circ\bar{\pi}_i^+)^*\sL_i)_{Z_{i+1,0}}$ 
is the pullback of $\sL_i$, $(\nu_{W_i}\circ\bar{\pi}_i^+)^*\sL_i$ to $Z_{i+1,0}$, 
respectively. 
Hence $H_{i+1,0}\sim_{\Q}(\nu_{Y_{i+1}}\circ\phi_{i+1})^*H_{i+1}$ holds.
Thus we prove \eqref{fund_claim7}. 
\end{proof}

Therefore, by Claim \ref{fund_claim}, if we assume the termination of the sequence 
in \eqref{fund_claim7}, then we can inductively construct the diagram 

\[\xymatrix{
Z_{i,0} \ar@{-->}[r] \ar[d]_{\phi_i} & Z_{i,1} \ar@{-->}[r] & {\cdots}\ar@{-->}[r] & Z_{i+1,0} \ar[d]^{\phi_{i+1}}\\
\bar{Y}_i \ar[d]_{\nu_{Y_i}} &        &              & \bar{Y}_{i+1} \ar[d]^{\nu_{Y_{i+1}}} \\
Y_i \ar[dr]_{\pi_i}         &            &              & Y_{i+1} \ar[dl]^{\pi_i^+}  \\
                      &  W_i               & W_i^d  \ar[l]^{d_i}     &                  \\
}\]
for any $i$ with $K_{Y_i}+\Delta_i$ not nef over $X$.

\subsection{Termination of the program}\label{st_step4}

In this section, we show the following: 

\begin{enumerate}
\renewcommand{\theenumi}{\alph{enumi}}
\renewcommand{\labelenumi}{(\theenumi)}
\item\label{step41}
For any $i\geq 0$ with $K_{Y_i}+\Delta_i$ not nef over $X$, 
the sequence $Z_{i,0}\dashrightarrow Z_{i,1}\dashrightarrow\cdots$ 
terminates. 
\item\label{step42}
The sequence $Y_0\dashrightarrow Y_1\dashrightarrow\cdots$ 
terminates. 
\end{enumerate}

Assume either \eqref{step41} does not hold for some $i$, or 
\eqref{step41} is true and \eqref{step42} does not hold. 
Then there exists an infinite sequence 
\[
Z_{0,0}\dashrightarrow Z_{0,1}\dashrightarrow\cdots\dashrightarrow 
Z_{1,0}\dashrightarrow Z_{1,1}\dashrightarrow\cdots
\]
of varieties $Z_{i,j}$ (we note that $m_i\in\Z_{>0}$ by Claim \ref{fund_claim} 
\eqref{fund_claim7}). 
For any $i$ and $j$, under the natural embedding 
\[
\NC(Z_{i,j}/\bar{W}_i)\hookrightarrow\NC(Z_{i,j}/\bar{X}),
\] 
the cone $\overline{\NE}(Z_{i,j}/\bar{W}_i)$ is an extremal face in 
$\overline{\NE}(Z_{i,j}/\bar{X})$. 
Hence the extremal ray $R_{i,j}\subset\overline{\NE}(Z_{i,j}/\bar{W}_i)$ can be seen as a 
$(K_{Z_{i,j}}+B_{i,j})$-negative extremal ray $R'_{i,j}\subset\overline{\NE}(Z_{i,j}/\bar{X})$. 
By Claim \ref{fund_claim} \eqref{fund_claim5}, 
$(K_{Z_{i,j}}+B_{i,j}+\lambda_iH_{i,j}\cdot R'_{i,j})=0$ holds. Moreover, 
\[
\lambda_i=\inf\{\lambda\in\R_{\geq 0}\, |\, K_{Z_{i,j}}+B_{i,j}
+\lambda H_{i,j} \text{ is nef over }\bar{X}\}
\]
holds by Claim \ref{fund_claim} \eqref{fund_claim6}. 
The contraction morphism with respect to $R'_{i,j}$ over $\bar{X}$ is equal to $\pi_{i,j}$. 
Thus this sequence is a 
$(K_{Z_{0,0}}+B_{0,0})$-MMP with scaling $H_{0,0}$ over $\bar{X}$. 
By Theorem \ref{BCHM_thm}, Lemma \ref{base_lem} and the facts
that the pair $(Z_{0,0}, B_{0,0})$ has canonical singularities and 
the morphism $Z_{0,0}\to\bar{X}$ is a projective log resolution of the pair 
$(\bar{X}, \Delta_{\bar{X}}+D_{\bar{X}})$, this MMP over $\bar{X}$ must terminates. 
This leads to a contradiction. 
Thus both \eqref{step41} and \eqref{step42} are true.

\subsection{Conclusion}\label{st_step5}

By Sections \ref{st_step2} and \ref{st_step4}, 
there exists a projective and birational morphism 
$f_m\colon Y_m\to X$ such that $K_{Y_m}+\Delta_m$ is nef over $X$. 
Furthermore, the pair $(Y_m, \Delta_m)$ is semi-terminal, the morphism $f_m$ is 
an isomorphism in codimension $1$ over $X$ and 
is an isomorphism at any generic point of $D_{Y_m}$ by construction. Therefore 
the morphism $f_m$ is a semi-terminal modification of $(X, \Delta)$. 

As a consequence, 
we have completed the proof of Theorem \ref{mainthm} \eqref{mainthm2}.

\end{document}